
\documentclass{article}
\usepackage{amsfonts}
\usepackage{amsmath}
\usepackage{hyperref}

\setcounter{MaxMatrixCols}{10}

\newtheorem{theorem}{Theorem}

\newtheorem{corollary}[theorem]{Corollary}

\newtheorem{lemma}[theorem]{Lemma}

\newtheorem{remark}[theorem]{Remark}

\newenvironment{proof}[1][Proof]{\noindent\textbf{#1.} }{\ \rule{0.5em}{0.5em}}

\begin{document}

\title{On the application of a hypergeometric identity to generate
generalized hypergeometric reduction formulas }
\author{J.L. Gonz\'{a}lez-Santander, \\
Departament of Mathematics. University of Oviedo. \\
C/ Federico Garc\'{\i}a Lorca 18, Oviedo 33007, Asturias, Spain. }
\maketitle

\begin{abstract}
We systematically exploit a new generalized hypergeometric identity to
obtain new hypergeometric summation formulas. As a consistency test,
alternative proofs for some special cases are also provided. As a byproduct
new summation formulas involving finite sums involving the psi function, or
a recursive formula for the Bateman's G function are derived. Finally, all
the results have been numerically checked with MATHEMATICA.
\end{abstract}

\textbf{Keywords}:\ Gamma function, psi function, Bateman's $G$ function,
beta function, incomplete beta function, summation formulas for generalized
hypergeometric functions.

\textbf{MSC}:\ 33C05, 33C10, 33C15, 33C20, 33B15, 33B20.

\section{Introduction}

Mathematical applications of the generalized hypergeometric functions are
abundant in the existing literature. For instance, a variety of problems in
classical mechanics and mathematical physics lead to Picard-Fuchs equations.
These equations are frequently solvable in terms of generalized
hypergeometric functions \cite{Berglund}. Also, many combinatorial
identities, especially ones involving binomial and related coefficients, are
special cases of hypergeometric identities \cite[Sect. 2.7]{Andrews}.
Another example is given in the calculation of the moments of certain
probability distributions, which are given in terms of generalized
hypergeometric functions \cite{ProbabilityJL}. Therefore, the calculation of
generalized hypergeometric functions for particular values of the argument
and the parameters in terms of more simple functions is of great relevance.
The classical compilation of these summation formulas is found in \cite[%
Chap. 7]{Prudnikov3}. A revision, as well as an extension of these tables,
was carried out in \cite{Kolbig}. More recently, we found a new compilation
of representations of generalized hypergeometric functions in \cite[Chap. 8]%
{Brychov}.

In existing literature, we found several papers devoted to the calculation
of hypergeometric summation formulas for a given number of parameters and
particular values of the argument (see i.e. \cite{Rakha}, \cite{Lewanowicz},
\cite{Kim}, \cite{Choi}, \cite{Atash}). However, the number of papers
devoted to the calculation of hypergeometric summation formulas for an
arbitrary number of parameters as well as an arbitrary argument is
relatively scarce (see i.e. \cite{Karlson}, \cite{pFqJL}). The aim of this
paper is to contribute to this last type of articles.

This paper is organized as follows. Section \ref{Section: Preliminaries}
presents the special functions that will be used throughout the paper, as
well as some of their basic properties. Section \ref{Section: Main results}
proves the main hypergeometric identity on which most of the results
obtained in the article are based. In Section \ref{Section: particular
arguments} we apply this hypergeometric identity to obtain reduction
formulas for particular arguments, while in Sections \ref{Section: finite p
and arbitrary z} and \ref{Section: arbitrary p and z} we apply it to
arbitrary arguments. Finally, we collect our conclusions in Section \ref%
{Section: conclusions}.

All the results presented in this paper have been tested with MATHEMATICA.
The corresponding MATHEMATICA notebook is available at %
\url{https://shorturl.at/tGOwb}.

\section{Preliminaries}

\label{Section: Preliminaries}

The gamma function is usually defined as \cite[Eqn. 1.1.1]{Lebedev}
\begin{equation}
\Gamma \left( z\right) :=\int_{0}^{\infty }e^{-t}t^{z-1}dt.\quad \mathrm{Re}%
z>0,  \label{gamma_def}
\end{equation}%
and satisfies the reflection formula \cite[Eqn. 1.2.2]{Lebedev}%
\begin{equation}
\Gamma \left( z\right) \,\Gamma \left( 1-z\right) =\frac{\pi }{\sin \pi z}%
,\quad z\notin
\mathbb{Z}
.  \label{gamma_reflection}
\end{equation}%
The logarithmic derivative of $\Gamma \left( z\right) $ is defined as \cite[%
Eqn. 1.3.1]{Lebedev}:%
\begin{equation}
\psi \left( z\right) :=\frac{\Gamma ^{\prime }\left( z\right) }{\Gamma
\left( z\right) },  \label{Psi_def}
\end{equation}%
and the Bateman's $G$ function is defined in terms of the $\psi \left(
z\right) $ function as \cite[Eqn. 44:13:1]{Atlas}:
\begin{equation}
\beta \left( z\right) :=\frac{1}{2}\left[ \psi \left( \frac{z+1}{2}\right)
-\psi \left( \frac{z}{2}\right) \right] .  \label{Bateman_G_def}
\end{equation}%
The beta function is given by \cite[Eqn. 43:13:1-2]{Atlas}%
\begin{equation}
\mathrm{B}\left( \alpha ,\beta \right) :=\int_{0}^{1}t^{\alpha -1}\left(
1-t\right) ^{\beta -1}dt=\frac{\Gamma \left( \alpha \right) \,\Gamma \left(
\beta \right) }{\Gamma \left( \alpha +\beta \right) },  \label{beta_def}
\end{equation}%
and the incomplete beta function \cite[Eqn. 8.17.1]{NIST}:%
\begin{equation}
\mathrm{B}_{z}\left( \alpha ,\beta \right) :=\int_{0}^{z}t^{\alpha -1}\left(
1-t\right) ^{\beta -1}dt.  \label{incomplete_beta_def}
\end{equation}%
The Pochhammer polynomial can be expressed in terms of gamma functions as%
\begin{equation}
\left( \alpha \right) _{k}:=\prod_{j=0}^{k-1}\left( \alpha +j\right) =\frac{%
\Gamma \left( \alpha +k\right) }{\Gamma \left( \alpha \right) }.
\label{Pochhammer_def}
\end{equation}%
Note that, according to \cite[Eqn. 18:5:1]{Atlas}%
\begin{equation}
\left( -\alpha \right) _{k}=\left( -1\right) ^{k}\left( \alpha -k+1\right)
_{k},  \label{(-x)_n}
\end{equation}%
thus for $m=0,1,2,\ldots $, we have
\begin{equation}
\left( -m\right) _{k}=\frac{\left( -1\right) ^{k}m!}{\left( m-k\right) !}.
\label{(-m)_k}
\end{equation}%
Also, applying the reflection formula (\ref{gamma_reflection}), we have%
\begin{equation}
\left( \alpha \right) _{-n}=\frac{\left( -1\right) ^{n}}{\left( 1-\alpha
\right) _{n}}.  \label{(alpha)_-n}
\end{equation}%
The generalized hypergeometric function is defined by the hypergeometric
series \cite[Eqn. 16.2.1]{NIST}
\begin{equation}
_{p}F_{q}\left( \left.
\begin{array}{c}
a_{1},\ldots ,a_{p} \\
b_{1},\ldots ,b_{q}%
\end{array}%
\right\vert z\right) =\sum_{k=0}^{\infty }\frac{\left( a_{1}\right)
_{k}\cdots \left( a_{p}\right) _{k}\,z^{k}}{k!\,\left( b_{1}\right)
_{k}\cdots \left( b_{q}\right) _{k}},  \label{Hypergeometric_sum}
\end{equation}%
wherever this series converges, and elsewhere by analytic continuation.
According to \cite[Eqn. 16.3.2]{NIST}, the following differentiation formula
is satisfied:%
\begin{eqnarray}
&&\frac{d^{n}}{dz^{n}}\left[ z^{\gamma }{}_{p}F_{q}\left( \left.
\begin{array}{c}
a_{1},\ldots ,a_{p} \\
b_{1},\ldots ,b_{q}%
\end{array}%
\right\vert z\right) \right]  \label{Diff_pFq_general} \\
&=&\left( \gamma -n+1\right) _{n}\,z^{\gamma -n}\,_{p+1}F_{q+1}\left( \left.
\begin{array}{c}
\gamma +1,a_{1},\ldots ,a_{p}, \\
\gamma +1-n,b_{1},\ldots ,b_{q},%
\end{array}%
\right\vert z\right) ,  \notag
\end{eqnarray}%
thus\
\begin{eqnarray}
&&_{p+1}F_{q+1}\left( \left.
\begin{array}{c}
a_{1},\ldots ,a_{p},a+n \\
b_{1},\ldots ,b_{q},a%
\end{array}%
\right\vert z\right)  \label{Diff_pFq} \\
&=&\frac{z^{1-a}}{\left( a\right) _{n}}\,\frac{d^{n}}{dz^{n}}\left[
z^{n+a-1}\,_{p}F_{q}\left( \left.
\begin{array}{c}
a_{1},\ldots ,a_{p} \\
b_{1},\ldots ,b_{q}%
\end{array}%
\right\vert z\right) \right] .  \notag
\end{eqnarray}%
Also, the Chu-Vandermonde summation formula is \cite[Eqn. 15.4.24]{NIST}:%
\begin{equation}
_{2}F_{1}\left( \left.
\begin{array}{c}
-n,a \\
c%
\end{array}%
\right\vert 1\right) =\frac{\left( c-a\right) _{n}}{\left( c\right) _{n}}.
\label{Chu_Vandermonde}
\end{equation}%
Further, according to \cite[Eqn. 15.8.1]{NIST}, we have the linear
transformation formula:%
\begin{equation}
_{2}F_{1}\left( \left.
\begin{array}{c}
a,b \\
c%
\end{array}%
\right\vert z\right) =\left( 1-z\right) ^{-a}\,_{2}F_{1}\left( \left.
\begin{array}{c}
a,c-b \\
c%
\end{array}%
\right\vert \frac{z}{z-1}\right) .  \label{2F1_linear}
\end{equation}%
Finally, Leibniz's differentiation formula \cite[Eqn. 1.4.2]{NIST} is given
by
\begin{equation}
\frac{d^{n}}{dz^{n}}\left[ f\left( z\right) \,g\left( z\right) \right]
=\sum_{k=0}^{n}\binom{n}{k}\frac{d^{k}}{dz^{k}}\left[ f\left( z\right) %
\right] \frac{d^{n-k}}{dz^{n-k}}\left[ g\left( z\right) \right] .
\label{Leibniz}
\end{equation}

\section{Main result}

\label{Section: Main results}

\begin{theorem}
\label{Theorem: Main}The following representation of the generalized
hypergeometric function $_{p}F_{q}$ holds true for $n=0,1,2,\ldots $%
\begin{eqnarray}
&&_{p}F_{q}\left( \left.
\begin{array}{c}
a_{1},\ldots ,a_{p} \\
b_{1},\ldots ,b_{q}%
\end{array}%
\right\vert z\right)  \label{Main_theorem} \\
&=&\sum_{k=0}^{n}\binom{n}{k}\frac{\,\left( a_{1}\right) _{k}\cdots \left(
a_{p}\right) _{k}\,\,z^{k}}{\,\left( b_{1}\right) _{k}\cdots \left(
b_{q}\right) _{k}\,\left( c+n\right) _{k}}\,_{p+1}F_{q+1}\left( \left.
\begin{array}{c}
a_{1}+k,\ldots ,\,a_{p}+k,\,c+k \\
b_{1}+k,\ldots ,\,b_{q}+k,\,c+n+k%
\end{array}%
\right\vert z\right) .  \notag
\end{eqnarray}
\end{theorem}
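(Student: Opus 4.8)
The plan is to prove (\ref{Main_theorem}) by expanding its right-hand side into a power series in $z$ and verifying term by term that it reproduces the series (\ref{Hypergeometric_sum}) that defines the left-hand side. To keep the notation compact, put $A_{k}=\left( a_{1}\right) _{k}\cdots \left( a_{p}\right) _{k}$ and $B_{k}=\left( b_{1}\right) _{k}\cdots \left( b_{q}\right) _{k}$, so that (\ref{Main_theorem}) asserts that $\sum_{N\geq 0}A_{N}z^{N}/\left( N!\,B_{N}\right) $ equals the displayed sum. First I would expand the inner $_{p+1}F_{q+1}$ in the $k$-th summand by (\ref{Hypergeometric_sum}) and simplify the products of Pochhammer symbols using the elementary rule $\left( x\right) _{k}\left( x+k\right) _{m}=\left( x\right) _{k+m}$, applied with $x=a_{i}$, $x=b_{i}$ and $x=c+n$, together with its rearrangement $\left( c+k\right) _{m}=\left( c\right) _{k+m}/\left( c\right) _{k}$. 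These reductions turn the right-hand side of (\ref{Main_theorem}) into the double series
\[
\sum_{k=0}^{n}\sum_{m=0}^{\infty }\binom{n}{k}\,\frac{A_{k+m}}{B_{k+m}}\,\frac{\left( c\right) _{k+m}}{\left( c\right) _{k}\,\left( c+n\right) _{k+m}}\,\frac{z^{k+m}}{m!}.
\]

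Next I would collect the terms carrying a common power of $z$. Writing $N=k+m$ and interchanging the two summations — which is legitimate inside the common disc of convergence of the hypergeometric functions involved, the general statement then following by analytic continuation in $z$ — the double series above becomes
\[
\sum_{N=0}^{\infty }\frac{A_{N}}{B_{N}}\,\frac{\left( c\right) _{N}}{\left( c+n\right) _{N}}\,z^{N}\sum_{k=0}^{\min \left( n,N\right) }\binom{n}{k}\,\frac{1}{\left( N-k\right) !\,\left( c\right) _{k}}.
\]
Comparing the coefficient of $A_{N}z^{N}/B_{N}$ here with the one on the left-hand side of (\ref{Main_theorem}), the theorem reduces to the single scalar identity
\[
\sum_{k=0}^{\min \left( n,N\right) }\binom{n}{k}\,\frac{N!}{\left( N-k\right) !\,\left( c\right) _{k}}=\frac{\left( c+n\right) _{N}}{\left( c\right) _{N}},\qquad N=0,1,2,\ldots
\]

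Finally I would prove this identity by recognizing its left-hand side as a terminating Gauss series at unit argument. Using (\ref{(-m)_k}) to write $N!/\left( N-k\right) !=\left( -1\right) ^{k}\left( -N\right) _{k}$ and $\binom{n}{k}=\left( -1\right) ^{k}\left( -n\right) _{k}/k!$, the sum equals
\[
\sum_{k\geq 0}\frac{\left( -n\right) _{k}\left( -N\right) _{k}}{k!\,\left( c\right) _{k}}=\,_{2}F_{1}\left( \left.\begin{array}{c}-n,-N \\ c\end{array}\right\vert 1\right) ,
\]
which by the Chu--Vandermonde formula (\ref{Chu_Vandermonde}) is $\left( c+N\right) _{n}/\left( c\right) _{n}$; rewriting both Pochhammer symbols as quotients of gamma functions via (\ref{Pochhammer_def}) shows at once that $\left( c+N\right) _{n}/\left( c\right) _{n}=\left( c+n\right) _{N}/\left( c\right) _{N}$, which is exactly what is required. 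The only genuinely delicate point in this scheme is the justification of the interchange of summations; everything else is routine bookkeeping with Pochhammer symbols, and the Chu--Vandermonde evaluation is the single substantive ingredient. As an independent check, one may also apply Leibniz's rule (\ref{Leibniz}) to the $n$-th derivative of $z^{c+n-1}\,_{p}F_{q}\left( \cdots \right) $ and compare the outcome with (\ref{Diff_pFq}); this produces a companion expansion whose power-series form hinges on exactly the same Chu--Vandermonde instance.
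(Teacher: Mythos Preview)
Your proof is correct and rests on precisely the same ingredient as the paper's argument---the Chu--Vandermonde evaluation of $_{2}F_{1}(-n,-N;c;1)$ combined with the standard Pochhammer splitting $(x)_{k}(x+k)_{m}=(x)_{k+m}$ and an interchange of summations. The only difference is directional: the paper inserts the Chu--Vandermonde identity (in the form $1=\frac{(c)_{j}}{(c+n)_{j}}\,_{2}F_{1}(-n,-j;c;1)$) into the defining series for $_{p}F_{q}$ and then reindexes to produce the right-hand side, whereas you expand the right-hand side and verify coefficient by coefficient that it collapses to the left; this is the same computation read backwards.
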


\begin{proof}
According to the Chu-Vandermonde summation formula (\ref{Chu_Vandermonde}),
the definition of the hypergeometric sum (\ref{Hypergeometric_sum}), and the
property (\ref{(-m)_k}), we have%
\begin{eqnarray}
1 &=&\frac{\left( c\right) _{j}}{\left( c+n\right) _{j}}\,_{2}F_{1}\left(
\left.
\begin{array}{c}
-n,-j \\
c%
\end{array}%
\right\vert 1\right) =\frac{\left( c\right) _{j}}{\left( c+n\right) _{j}}%
\sum_{k=0}^{n}\frac{\left( -n\right) _{k}\,\left( -j\right) _{k}}{k!\left(
c\right) _{k}}  \notag \\
&=&\frac{\left( c\right) _{j}}{\left( c+n\right) _{j}}\sum_{k=0}^{\infty }%
\binom{n}{k}\frac{\left( -1\right) ^{k}\left( -j\right) _{k}}{\left(
c\right) _{k}}.  \label{1=expression}
\end{eqnarray}%
Insert (\ref{1=expression})\ into (\ref{Hypergeometric_sum})\ and exchange
the summation order to arrive at%
\begin{eqnarray}
&&_{p}F_{q}\left( \left.
\begin{array}{c}
a_{1},\ldots ,a_{p} \\
b_{1},\ldots ,b_{q}%
\end{array}%
\right\vert z\right)  \notag \\
&=&\sum_{k=0}^{n}\binom{n}{k}\frac{\left( -1\right) ^{k}}{\left( c\right)
_{k}}\underset{S}{\underbrace{\sum_{j=k}^{\infty }\frac{\left( -j\right)
_{k}\,\left( a_{1}\right) _{j}\cdots \left( a_{p}\right) _{j}}{j!\,\left(
b_{1}\right) _{j}\cdots \left( b_{q}\right) _{j}}\frac{\left( c\right) _{j}}{%
\left( c+n\right) _{j}}z^{j}}},  \label{p_F_q_S}
\end{eqnarray}%
where the summation indices are $k=0,1,2,\ldots ,n$ and $j=k,k+1,\ldots $.
Now, perform the change $\ell =j-k$, thus
\begin{equation*}
S=\sum_{\ell =0}^{\infty }\frac{\left( -k-\ell \right) _{k}\,\left(
a_{1}\right) _{k+\ell }\cdots \left( a_{p}\right) _{k+\ell }}{\left( k+\ell
\right) !\,\left( b_{1}\right) _{k+\ell }\cdots \left( b_{q}\right) _{k+\ell
}}\frac{\left( c\right) _{k+\ell }}{\left( c+n\right) _{k+\ell }}z^{k+\ell }.
\end{equation*}%
From (\ref{(-x)_n}), we have%
\begin{equation}
\left( -k-\ell \right) _{k}=\left( -1\right) ^{k}\left( \ell +1\right) _{k},
\label{Prop_1}
\end{equation}%
and from (\ref{Pochhammer_def}),%
\begin{equation}
\left( a\right) _{k+\ell }=\left( a\right) _{k}\left( a+k\right) _{\ell },
\label{Prop_2}
\end{equation}%
and%
\begin{equation}
\left( k+\ell \right) !=\ell !\left( 1+\ell \right) _{k}.  \label{Prop_3}
\end{equation}%
Take into account (\ref{Prop_1})-(\ref{Prop_3}), as well as the definition (%
\ref{Hypergeometric_sum}), to obtain
\begin{eqnarray}
&&S  \label{S_resultado} \\
= &&\frac{\,\left( a_{1}\right) _{k}\cdots \left( a_{p}\right) _{k}}{\left(
b_{1}\right) _{k}\cdots \left( b_{q}\right) _{k}}\frac{\left( c\right)
_{k}\,\left( -z\right) ^{k}}{\left( c+n\right) _{k}}\sum_{\ell =0}^{\infty }%
\frac{\,\left( a_{1}+k\right) _{\ell }\cdots \left( a_{p}+k\right) _{\ell }}{%
\ell !\,\left( b_{1}+k\right) _{\ell }\cdots \left( b_{q}\right) _{k+\ell }}%
\frac{\left( c+k\right) _{\ell }}{\left( c+n+k\right) _{\ell }}z^{\ell }
\notag \\
= &&\frac{\,\left( a_{1}\right) _{k}\cdots \left( a_{p}\right) _{k}}{\left(
b_{1}\right) _{k}\cdots \left( b_{q}\right) _{k}}\frac{\left( c\right)
_{k}\,\left( -z\right) ^{k}}{\left( c+n\right) _{k}}\,_{p+1}F_{q+1}\left(
\left.
\begin{array}{c}
a_{1}+k,\ldots ,\,a_{p}+k,\,c+k \\
b_{1}+k,\ldots ,\,b_{q}+k,\,c+n+k%
\end{array}%
\right\vert z\right) .  \notag
\end{eqnarray}%
Finally, substitute (\ref{S_resultado})\ in (\ref{p_F_q_S}) to complete the
proof.
\end{proof}

\begin{corollary}
\label{Corollary: main}For $n=0,1,2,\ldots $the following reduction formula
holds true:\
\begin{eqnarray}
&&_{p+1}F_{q+1}\left( \left.
\begin{array}{c}
a_{1},\ldots ,a_{p},c+n \\
b_{1},\ldots ,b_{q},c%
\end{array}%
\right\vert z\right)   \label{Corollary_main} \\
&=&\sum_{k=0}^{n}\binom{n}{k}\frac{\,\left( a_{1}\right) _{k}\cdots \left(
a_{p}\right) _{k}\,\,z^{k}}{\,\left( b_{1}\right) _{k}\cdots \left(
b_{q}\right) _{k}\,\left( c\right) _{k}}\,_{p}F_{q}\left( \left.
\begin{array}{c}
a_{1}+k,\ldots ,\,a_{p}+k \\
b_{1}+k,\ldots ,\,b_{q}+k%
\end{array}%
\right\vert z\right) .  \notag
\end{eqnarray}
\end{corollary}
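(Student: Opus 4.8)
The plan is to obtain (\ref{Corollary_main}) as an immediate consequence of Theorem \ref{Theorem: Main}, via a single specialization of parameters, so that essentially no fresh computation is needed.

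First I would apply the identity (\ref{Main_theorem}) of Theorem \ref{Theorem: Main} with $p$ and $q$ replaced by $p+1$ and $q+1$: take the additional upper parameter to be $c+n$, the additional lower parameter to be $c$, and let the free parameter appearing in (\ref{Main_theorem}) also be this same $c$ (with the same $n$). With these choices the left-hand side of (\ref{Main_theorem}) is precisely
\[
{}_{p+1}F_{q+1}\left( \left.
\begin{array}{c}
a_{1},\ldots ,a_{p},c+n \\
b_{1},\ldots ,b_{q},c
\end{array}
\right\vert z\right) ,
\]
the left-hand side of (\ref{Corollary_main}).

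Second I would simplify the right-hand side produced by (\ref{Main_theorem}). The prefactor multiplying $\binom{n}{k}$ becomes
\[
\frac{\left( a_{1}\right) _{k}\cdots \left( a_{p}\right) _{k}\left( c+n\right) _{k}\,z^{k}}{\left( b_{1}\right) _{k}\cdots \left( b_{q}\right) _{k}\left( c\right) _{k}\left( c+n\right) _{k}}=\frac{\left( a_{1}\right) _{k}\cdots \left( a_{p}\right) _{k}\,z^{k}}{\left( b_{1}\right) _{k}\cdots \left( b_{q}\right) _{k}\left( c\right) _{k}},
\]
the repeated factor $\left( c+n\right) _{k}$ cancelling. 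Likewise the hypergeometric factor is now a ${}_{p+2}F_{q+2}$ whose upper parameters are $a_{1}+k,\ldots ,a_{p}+k,\,c+n+k,\,c+k$ and whose lower parameters are $b_{1}+k,\ldots ,b_{q}+k,\,c+k,\,c+n+k$; since $c+k$ and $c+n+k$ each appear once on top and once at the bottom, they cancel term by term in the defining series (\ref{Hypergeometric_sum}), so this ${}_{p+2}F_{q+2}$ collapses to ${}_{p}F_{q}\!\left(\left.{a_{1}+k,\ldots ,a_{p}+k \atop b_{1}+k,\ldots ,b_{q}+k}\right|z\right)$. Assembling these simplifications gives exactly (\ref{Corollary_main}).

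There is no real obstacle here: the whole difficulty is recognizing the right substitution, and once it is made nothing remains but the two parameter cancellations, which are immediate from (\ref{Hypergeometric_sum}). The minor caveats are that the cancelled lower parameters must avoid the non-positive integers, and that whatever genericity of $c$ (and of the $b_{i}$) is required for Theorem \ref{Theorem: Main} is likewise required here, the general statement then following by continuity. As a self-contained cross-check one could also reprove (\ref{Corollary_main}) along the exact lines of the proof of Theorem \ref{Theorem: Main}: insert the rearranged Chu-Vandermonde identity (\ref{1=expression}), in the form $\left( c+n\right) _{j}/\left( c\right) _{j}=\sum_{k=0}^{n}\binom{n}{k}\left( -1\right) ^{k}\left( -j\right) _{k}/\left( c\right) _{k}$, into the series for the left-hand ${}_{p+1}F_{q+1}$, swap the order of summation, and re-index by $\ell =j-k$ using (\ref{Prop_1})-(\ref{Prop_3}); this leads to the same formula.
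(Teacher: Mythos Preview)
Your proposal is correct: the paper states Corollary \ref{Corollary: main} without proof, leaving it implicit that it follows directly from Theorem \ref{Theorem: Main}, and your specialization (apply (\ref{Main_theorem}) with an extra upper parameter $c+n$, an extra lower parameter $c$, and the free parameter also equal to $c$, then cancel the repeated factors $(c+n)_k$, $c+k$, and $c+n+k$) is precisely the intended one-line derivation. The alternative direct argument you sketch, inserting (\ref{1=expression}) in the form $(c+n)_j/(c)_j=\sum_k\binom{n}{k}(-1)^k(-j)_k/(c)_k$ into the series for the left-hand ${}_{p+1}F_{q+1}$, is equally valid and amounts to rerunning the proof of Theorem \ref{Theorem: Main} with the roles of $c$ and $c+n$ swapped.
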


\section{Application to reduction formulas with arguments $z=\frac{1}{2},1$}

\label{Section: particular arguments}\bigskip

\begin{theorem}
For $a,c\in
\mathbb{C}
$ and $n=0,1,2,\ldots $, the following reduction formula holds true:%
\begin{equation}
_{3}F_{2}\left( \left.
\begin{array}{c}
a,a,c+n \\
a+1,c%
\end{array}%
\right\vert \frac{1}{2}\right) =a\,2^{a}\sum_{k=0}^{n}\binom{n}{k}\frac{%
\left( a\right) _{k}}{\left( c\right) _{k}}\,\beta \left( a+k\right) .
\label{3F2_(1/2)_0}
\end{equation}
\end{theorem}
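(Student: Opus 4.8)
The idea is to read the statement off Corollary \ref{Corollary: main} and then evaluate the single $_{2}F_{1}$ that appears inside the sum. Apply (\ref{Corollary_main}) with $p=2$, $q=1$, numerator parameters $a_{1}=a_{2}=a$, denominator parameter $b_{1}=a+1$, argument $z=\tfrac{1}{2}$, and the distinguished pair $c+n$ over $c$. This gives at once
\begin{equation*}
_{3}F_{2}\left( \left.
\begin{array}{c}
a,a,c+n \\
a+1,c
\end{array}
\right\vert \frac{1}{2}\right) =\sum_{k=0}^{n}\binom{n}{k}\frac{\left( a\right) _{k}\left( a\right) _{k}}{\left( a+1\right) _{k}\left( c\right) _{k}}\,\frac{1}{2^{k}}\,{}_{2}F_{1}\left( \left.
\begin{array}{c}
a+k,a+k \\
a+1+k
\end{array}
\right\vert \frac{1}{2}\right) .
\end{equation*}
By (\ref{Pochhammer_def}) one has $\left( a\right) _{k}/\left( a+1\right) _{k}=a/\left( a+k\right) $, so the prefactor collapses to $a\left( a\right) _{k}/\bigl[\left( a+k\right) \left( c\right) _{k}\,2^{k}\bigr]$, and the whole problem reduces to evaluating $_{2}F_{1}\left( b,b;b+1;\tfrac{1}{2}\right) $ with $b=a+k$.

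For that inner function I would use the linear transformation (\ref{2F1_linear}) with $a\mapsto b$, $b\mapsto b$, $c\mapsto b+1$ and $z=\tfrac{1}{2}$: since then $z/(z-1)=-1$ and $c-b=1$, it becomes $_{2}F_{1}\left( b,b;b+1;\tfrac{1}{2}\right) =2^{b}\,{}_{2}F_{1}\left( 1,b;b+1;-1\right) $. Expanding the right-hand side by (\ref{Hypergeometric_sum}), using $\left( 1\right) _{k}=k!$ and $\left( b\right) _{k}/\left( b+1\right) _{k}=b/\left( b+k\right) $, one obtains $2^{b}\,b\sum_{k\geq 0}\left( -1\right) ^{k}/\left( b+k\right) $. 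Writing $1/\left( b+k\right) =\int_{0}^{1}t^{b+k-1}\,dt$ and summing the geometric series identifies the remaining sum with $\int_{0}^{1}t^{b-1}/\left( 1+t\right) \,dt$, the standard integral representation of the Bateman $G$ function; equivalently, $\beta \left( b\right) =\sum_{k\geq 0}\left( -1\right) ^{k}/\left( b+k\right) $, which follows from (\ref{Bateman_G_def}) via the partial-fraction expansion of $\psi $. Hence $_{2}F_{1}\left( b,b;b+1;\tfrac{1}{2}\right) =b\,2^{b}\,\beta \left( b\right) $.

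Finally I would substitute $b=a+k$ back into the sum: the factor $a+k$ produced by the inner $_{2}F_{1}$ cancels the $a+k$ in the denominator, the powers of $2$ combine as $2^{-k}\cdot 2^{a+k}=2^{a}$, and what is left is exactly $a\,2^{a}\sum_{k=0}^{n}\binom{n}{k}\left( a\right) _{k}\,\beta \left( a+k\right) /\left( c\right) _{k}$, which is (\ref{3F2_(1/2)_0}). The only step that is not pure bookkeeping is the middle one: recognizing that $_{2}F_{1}\left( b,b;b+1;\tfrac{1}{2}\right) $ collapses onto the Bateman $G$ function and justifying the interchange of sum and integral (valid for $\mathrm{Re}\,b>0$, with the general case following by analytic continuation in $a$); everything else is manipulation of Pochhammer symbols.
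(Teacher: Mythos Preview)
Your proof is correct and follows essentially the same route as the paper: apply Corollary~\ref{Corollary: main}, simplify $\left(a\right)_{k}/\left(a+1\right)_{k}=a/(a+k)$, and use $_{2}F_{1}\left(b,b;b+1;\tfrac{1}{2}\right)=b\,2^{b}\,\beta\left(b\right)$. The only difference is that the paper quotes this last identity from \cite[Eqn.~7.3.7(16)]{Prudnikov3}, whereas you derive it via the linear transformation (\ref{2F1_linear}) and the series representation of $\beta$; that is a nice self-contained supplement rather than a different argument.
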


\begin{proof}
According to (\ref{Corollary: main}), we have%
\begin{eqnarray*}
&&_{3}F_{2}\left( \left.
\begin{array}{c}
a,a,c+n \\
a+1,c%
\end{array}%
\right\vert \frac{1}{2}\right) \\
&=&\sum_{k=0}^{n}\binom{n}{k}\frac{\left[ \left( a\right) _{k}\right]
^{2}\,2^{-k}}{\left( a+1\right) _{k}\,\left( c\right) _{k}}\,_{2}F_{1}\left(
\left.
\begin{array}{c}
a+k,a+k \\
a+1+k%
\end{array}%
\right\vert \frac{1}{2}\right) .
\end{eqnarray*}%
Apply the identity \
\begin{equation}
\frac{\left( a\right) _{k}}{\left( a+1\right) _{k}}=\frac{a}{a+k},
\label{(a)_k/(a+1)_k}
\end{equation}%
\ and the reduction formula \cite[Eqn. 7.3.7(16)]{Prudnikov3}:
\begin{equation*}
_{2}F_{1}\left( \left.
\begin{array}{c}
a,a \\
a+1%
\end{array}%
\right\vert \frac{1}{2}\right) =2^{a}a\,\,\beta \left( a\right) ,
\end{equation*}%
to complete the proof.
\end{proof}

\begin{remark}
For $c=a$, (\ref{3F2_(1/2)_0})\ reduces to%
\begin{equation*}
_{2}F_{1}\left( \left.
\begin{array}{c}
a,a+n \\
a+1%
\end{array}%
\right\vert \frac{1}{2}\right) =a\,2^{a}\sum_{k=0}^{n}\binom{n}{k}\,\beta
\left( a+k\right) .
\end{equation*}%
Applying the linear transformation formula (\ref{2F1_linear}) and the change
$n\rightarrow n+1$, we obtain%
\begin{equation}
_{2}F_{1}\left( \left.
\begin{array}{c}
-n,a \\
a+1%
\end{array}%
\right\vert -1\right) =a\sum_{k=0}^{n+1}\binom{n+1}{k}\,\beta \left(
a+k\right) .  \label{2F1(-1)_a}
\end{equation}%
However, applying (\ref{(-m)_k})\ and (\ref{(a)_k/(a+1)_k}) to the
definition of the hypergeometric function (\ref{Hypergeometric_sum}), we
arrive at
\begin{equation}
_{2}F_{1}\left( \left.
\begin{array}{c}
-n,a \\
a+1%
\end{array}%
\right\vert -1\right) =a\sum_{k=0}^{n}\binom{n}{k}\frac{1}{a+k}.
\label{2F1(-1)_b}
\end{equation}%
From (\ref{2F1(-1)_a})\ and (\ref{2F1(-1)_b}), we obtain the following
recursive equation for the Bateman's $G$ function:%
\begin{equation}
\beta \left( a+n+1\right) =\sum_{k=0}^{n}\binom{n}{k}\left[ \frac{1}{a+k}-%
\frac{n+1}{n+1-k}\beta \left( a+k\right) \right] .  \label{Beta_recursive}
\end{equation}
\end{remark}

\begin{theorem}
For $a,b,c\in
\mathbb{C}
$ and $n,m=0,1,2,\ldots $, the following reduction formula holds true:%
\begin{eqnarray}
&&_{3}F_{2}\left( \left.
\begin{array}{c}
a,b+m,c+n \\
\frac{a+b+1}{2},c%
\end{array}%
\right\vert \frac{1}{2}\right)  \label{3F2_(1/2)_A} \\
&=&\frac{2^{a-1}\,\Gamma \left( \frac{a+b+1}{2}\right) }{\Gamma \left(
a\right) }\sum_{k=0}^{n}\binom{n}{k}\frac{\left( b+m\right) _{k}}{\left(
c\right) _{k}}\sum_{s=0}^{m}\binom{m}{s}\frac{\Gamma \left( \frac{a+k+s}{2}%
\right) }{\Gamma \left( \frac{1+b+k+s}{2}\right) }.  \notag
\end{eqnarray}
\end{theorem}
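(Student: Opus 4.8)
The plan is to apply Corollary~\ref{Corollary: main} to strip off the parameter pair $\left(c+n,c\right)$, and then to reduce the resulting inner $_{2}F_{1}$ at argument $\tfrac12$ by combining a contiguity argument with Gauss's second summation theorem.

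First I would take $p=2$, $q=1$, $\left(a_{1},a_{2}\right)=\left(a,b+m\right)$, $b_{1}=\tfrac{a+b+1}{2}$ and $z=\tfrac12$ in the reduction formula of Corollary~\ref{Corollary: main}, which gives
\[
_{3}F_{2}\left( \left.
\begin{array}{c}
a,b+m,c+n \\
\tfrac{a+b+1}{2},c
\end{array}
\right\vert \tfrac12\right)
=\sum_{k=0}^{n}\binom{n}{k}\frac{\left(a\right)_{k}\left(b+m\right)_{k}\,2^{-k}}{\left(\tfrac{a+b+1}{2}\right)_{k}\left(c\right)_{k}}\,_{2}F_{1}\left( \left.
\begin{array}{c}
a+k,\,b+m+k \\
\tfrac{a+b+1}{2}+k
\end{array}
\right\vert \tfrac12\right).
\]
Everything then hinges on an evaluation of $_{2}F_{1}\!\left(\alpha,\beta+m;\tfrac{\alpha+\beta+1}{2};\tfrac12\right)$, which I would isolate and prove as an auxiliary identity before substituting back with $\alpha=a+k$, $\beta=b+k$.

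For the auxiliary identity, the first step is to show by induction on $m$ that
\[
_{2}F_{1}\left( \left.
\begin{array}{c}
\alpha,\beta+m \\
\gamma
\end{array}
\right\vert z\right)
=\sum_{s=0}^{m}\binom{m}{s}\frac{\left(\alpha\right)_{s}\,z^{s}}{\left(\gamma\right)_{s}}\,_{2}F_{1}\left( \left.
\begin{array}{c}
\alpha+s,\,\beta+s \\
\gamma+s
\end{array}
\right\vert z\right).
\]
The inductive step uses the elementary contiguity relation $_{2}F_{1}\!\left(\alpha,\beta+1;\gamma;z\right)=\,_{2}F_{1}\!\left(\alpha,\beta;\gamma;z\right)+\tfrac{\alpha z}{\gamma}\,_{2}F_{1}\!\left(\alpha+1,\beta+1;\gamma+1;z\right)$, itself immediate from $\left(\beta+1\right)_{k}=\left(\beta\right)_{k}+\tfrac{k}{\beta}\left(\beta\right)_{k}$ together with $\sum_{k\geq0}\tfrac{k\left(\alpha\right)_{k}\left(\beta\right)_{k}}{k!\left(\gamma\right)_{k}}z^{k}=\tfrac{\alpha\beta z}{\gamma}\,_{2}F_{1}\!\left(\alpha+1,\beta+1;\gamma+1;z\right)$; applying this relation and then invoking the induction hypothesis both at parameters $\left(\alpha,\beta,\gamma\right)$ and at the diagonally shifted $\left(\alpha+1,\beta+1,\gamma+1\right)$, Pascal's rule $\binom{m}{s}+\binom{m}{s-1}=\binom{m+1}{s}$ (after reindexing and using $\alpha\left(\alpha+1\right)_{s-1}=\left(\alpha\right)_{s}$, $\gamma\left(\gamma+1\right)_{s-1}=\left(\gamma\right)_{s}$) closes the induction. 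Next I would specialize $\gamma=\tfrac{\alpha+\beta+1}{2}$ and $z=\tfrac12$: each $_{2}F_{1}$ on the right then has lower parameter $\tfrac{\left(\alpha+s\right)+\left(\beta+s\right)+1}{2}$, so Gauss's second summation theorem $_{2}F_{1}\!\left(u,v;\tfrac{u+v+1}{2};\tfrac12\right)=\frac{\sqrt{\pi}\,\Gamma\!\left(\frac{u+v+1}{2}\right)}{\Gamma\!\left(\frac{u+1}{2}\right)\Gamma\!\left(\frac{v+1}{2}\right)}$ applies termwise; using $\Gamma\!\left(\tfrac{\alpha+\beta+1}{2}+s\right)/\left(\tfrac{\alpha+\beta+1}{2}\right)_{s}=\Gamma\!\left(\tfrac{\alpha+\beta+1}{2}\right)$ and Legendre's duplication formula on $\Gamma\left(\alpha+s\right)$ to simplify $\left(\alpha\right)_{s}2^{-s}/\Gamma\!\left(\tfrac{\alpha+s+1}{2}\right)$ collapses the sum to
\[
_{2}F_{1}\left( \left.
\begin{array}{c}
\alpha,\beta+m \\
\tfrac{\alpha+\beta+1}{2}
\end{array}
\right\vert \tfrac12\right)
=\frac{2^{\alpha-1}\,\Gamma\!\left(\tfrac{\alpha+\beta+1}{2}\right)}{\Gamma\left(\alpha\right)}\sum_{s=0}^{m}\binom{m}{s}\frac{\Gamma\!\left(\tfrac{\alpha+s}{2}\right)}{\Gamma\!\left(\tfrac{\beta+s+1}{2}\right)}.
\]

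Finally, putting $\alpha=a+k$, $\beta=b+k$ in this formula and inserting it into the sum over $k$, all $k$-dependence outside the double sum cancels: $\left(a\right)_{k}/\Gamma\left(a+k\right)=1/\Gamma\left(a\right)$, $\Gamma\!\left(\tfrac{a+b+1}{2}+k\right)/\left(\tfrac{a+b+1}{2}\right)_{k}=\Gamma\!\left(\tfrac{a+b+1}{2}\right)$ and $2^{-k}\,2^{\left(a+k\right)-1}=2^{a-1}$, leaving precisely the factor $\binom{n}{k}\left(b+m\right)_{k}/\left(c\right)_{k}$ and hence formula (\ref{3F2_(1/2)_A}). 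I expect the auxiliary $_{2}F_{1}$ identity to be the main obstacle: the crucial point is to iterate the contiguity relation \emph{along the diagonal}, which is what keeps every hypergeometric term summable by Gauss's second theorem and simultaneously produces the binomial weights $\binom{m}{s}$; the remaining work is routine Pochhammer and duplication-formula bookkeeping, carried out first for parameter values that avoid the poles of the gamma functions and keep the series convergent, with the general case following by analytic continuation.
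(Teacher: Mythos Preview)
Your proof is correct and follows exactly the same overall route as the paper: apply Corollary~\ref{Corollary: main} with $p=2$, $q=1$ to strip off the pair $(c+n,c)$, then evaluate the resulting $_{2}F_{1}\!\left(a+k,b+m+k;\tfrac{a+b+1}{2}+k;\tfrac12\right)$ and simplify. The only difference is that the paper simply quotes the auxiliary identity
\[
_{2}F_{1}\!\left(\alpha,\beta+m;\tfrac{\alpha+\beta+1}{2};\tfrac12\right)=\frac{2^{\alpha-1}\,\Gamma\!\left(\tfrac{\alpha+\beta+1}{2}\right)}{\Gamma(\alpha)}\sum_{s=0}^{m}\binom{m}{s}\frac{\Gamma\!\left(\tfrac{\alpha+s}{2}\right)}{\Gamma\!\left(\tfrac{\beta+s+1}{2}\right)}
\]
from Prudnikov \cite[Eqn.~7.3.7(2)]{Prudnikov3}, whereas you supply a self-contained derivation of it via the diagonal contiguity expansion, Gauss's second theorem, and the duplication formula. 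Your induction and the subsequent gamma-function bookkeeping are all correct; this makes your version longer but independent of the Prudnikov tables, while the paper's version is a two-line citation.
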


\begin{proof}
According to (\ref{Corollary_main}), we have%
\begin{eqnarray*}
&&_{3}F_{2}\left( \left.
\begin{array}{c}
a,b+m,c+n \\
\frac{a+b+1}{2},c%
\end{array}%
\right\vert \frac{1}{2}\right) \\
&=&\sum_{k=0}^{n}\binom{n}{k}\frac{\left( a\right) _{k}\left( b+m\right) _{k}%
}{\left( \frac{a+b+1}{2}\right) _{k}\left( c\right) _{k}}\,2^{-k}\,_{2}F_{1}%
\left( \left.
\begin{array}{c}
a+k,b+m+k \\
\frac{a+b+1}{2}+k%
\end{array}%
\right\vert \frac{1}{2}\right) .
\end{eqnarray*}%
Apply the reduction formula \cite[7.3.7(2)]{Prudnikov3}:%
\begin{equation*}
_{2}F_{1}\left( \left.
\begin{array}{c}
a,b+m \\
\frac{a+b+1}{2}%
\end{array}%
\right\vert \frac{1}{2}\right) =\frac{2^{a-1}\,\Gamma \left( \frac{a+b+1}{2}%
\right) }{\Gamma \left( a\right) }\sum_{s=0}^{m}\binom{m}{s}\frac{\Gamma
\left( \frac{a+s}{2}\right) }{\Gamma \left( \frac{1+b+s}{2}\right) },
\end{equation*}%
and simplify the result to complete the proof.
\end{proof}

\begin{theorem}
For $a,b,c\in
\mathbb{C}
$ and $n,m=0,1,2,\ldots $, the following reduction formula holds true:%
\begin{eqnarray}
&&_{3}F_{2}\left( \left.
\begin{array}{c}
a,b-m,c+n \\
\frac{a+b+1}{2},c%
\end{array}%
\right\vert \frac{1}{2}\right)  \label{3F2_(1/2)_B} \\
&=&\frac{2^{a-1}\,\Gamma \left( \frac{a+b+1}{2}\right) \,\Gamma \left( \frac{%
b-a+1}{2}-m\right) }{\Gamma \left( a\right) \,\Gamma \left( \frac{b-a+1}{2}%
\right) }\sum_{k=0}^{n}\binom{n}{k}\frac{\left( b-m\right) _{k}}{\left(
c\right) _{k}}\sum_{s=0}^{m}\binom{m}{s}\frac{\Gamma \left( \frac{a+k+s}{2}%
\right) }{\Gamma \left( \frac{1+b+k+s}{2}-m\right) }.  \notag
\end{eqnarray}
\end{theorem}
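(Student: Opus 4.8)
The plan is to follow the same two-step scheme used in the proofs of the two preceding theorems. First I would apply Corollary \ref{Corollary_main} with $p=q=1$, upper parameters $a_{1}=a$, $a_{2}=b-m$, lower parameter $b_{1}=\tfrac{a+b+1}{2}$ and argument $z=\tfrac{1}{2}$. This peels the pair $(c+n,c)$ off the left-hand side of (\ref{3F2_(1/2)_B}) and rewrites it as
\begin{equation*}
\sum_{k=0}^{n}\binom{n}{k}\frac{\left( a\right) _{k}\left( b-m\right) _{k}\,2^{-k}}{\left( \frac{a+b+1}{2}\right) _{k}\left( c\right) _{k}}\,{}_{2}F_{1}\left( \left. \begin{array}{c} a+k,\,b-m+k \\ \frac{a+b+1}{2}+k \end{array} \right\vert \frac{1}{2}\right) .
\end{equation*}
The structural point is that the shift $a\mapsto a+k$, $b\mapsto b+k$ turns $\tfrac{a+b+1}{2}$ into $\tfrac{a+b+1}{2}+k$, so each inner ${}_{2}F_{1}$ is again of the form ${}_{2}F_{1}\!\left( A,B-m;\tfrac{A+B+1}{2};\tfrac{1}{2}\right)$ with $A=a+k$, $B=b+k$, i.e. exactly the shape for which a closed form is available.

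Second, I would invoke the $b-m$ analogue of the reduction formula \cite[7.3.7(2)]{Prudnikov3} used in the previous theorem — presumably \cite[7.3.7(3)]{Prudnikov3} —
\begin{equation*}
{}_{2}F_{1}\left( \left. \begin{array}{c} a,\,b-m \\ \frac{a+b+1}{2} \end{array} \right\vert \frac{1}{2}\right) =\frac{2^{a-1}\,\Gamma \left( \frac{a+b+1}{2}\right) \Gamma \left( \frac{b-a+1}{2}-m\right) }{\Gamma \left( a\right) \,\Gamma \left( \frac{b-a+1}{2}\right) }\sum_{s=0}^{m}\binom{m}{s}\frac{\Gamma \left( \frac{a+s}{2}\right) }{\Gamma \left( \frac{1+b+s}{2}-m\right) },
\end{equation*}
and apply it with $a\mapsto a+k$, $b\mapsto b+k$. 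Here I would use that $\tfrac{b-a+1}{2}$ and $\tfrac{b-a+1}{2}-m$ involve only $b-a$ and are therefore invariant under this shift, whereas $\Gamma(\tfrac{a+b+1}{2})\mapsto\Gamma(\tfrac{a+b+1}{2}+k)$, $\Gamma(a)\mapsto\Gamma(a+k)$, $2^{a-1}\mapsto 2^{a+k-1}$, and each Gamma in the $s$-sum picks up $\tfrac{k}{2}$ in its argument.

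Third, I would substitute this evaluation back into the $k$-sum and collect. Using (\ref{Pochhammer_def}) in the forms $\left( a\right) _{k}/\Gamma(a+k)=1/\Gamma(a)$ and $\Gamma(\tfrac{a+b+1}{2}+k)/\left( \tfrac{a+b+1}{2}\right) _{k}=\Gamma(\tfrac{a+b+1}{2})$, together with $2^{-k}\,2^{a+k-1}=2^{a-1}$, the $k$-independent factor $\dfrac{2^{a-1}\,\Gamma\left( \frac{a+b+1}{2}\right) \Gamma\left( \frac{b-a+1}{2}-m\right) }{\Gamma\left( a\right) \,\Gamma\left( \frac{b-a+1}{2}\right) }$ pulls out of the double sum, and what remains is exactly $\sum_{k=0}^{n}\binom{n}{k}\tfrac{\left( b-m\right) _{k}}{\left( c\right) _{k}}\sum_{s=0}^{m}\binom{m}{s}\tfrac{\Gamma\left( \frac{a+k+s}{2}\right) }{\Gamma\left( \frac{1+b+k+s}{2}-m\right) }$, which is the right-hand side of (\ref{3F2_(1/2)_B}).

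The only genuinely non-routine ingredient is the ${}_{2}F_{1}$ evaluation at $\tfrac{1}{2}$ for the $b-m$ case. If it is not tabulated verbatim in \cite[Chap. 7]{Prudnikov3}, I would derive it by induction on the integer $m$ from Gauss's second summation theorem ${}_{2}F_{1}\!\left( a,b;\tfrac{a+b+1}{2};\tfrac{1}{2}\right)=\dfrac{\sqrt{\pi}\,\Gamma\!\left( \frac{a+b+1}{2}\right) }{\Gamma\!\left( \frac{a+1}{2}\right) \Gamma\!\left( \frac{b+1}{2}\right) }$ via a contiguous relation in the second upper parameter, and I would sanity-check the $m=0$ case against Gauss's theorem through the Legendre duplication formula $\Gamma(a)=2^{a-1}\pi^{-1/2}\,\Gamma\!\left( \tfrac{a}{2}\right) \Gamma\!\left( \tfrac{a+1}{2}\right)$. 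No convergence or continuation subtleties arise, since $\tfrac{1}{2}$ lies inside the unit disk and $m$ is a non-negative integer (so the $s$-sum is finite); one only needs $\tfrac{a+b+1}{2}$, $c$ and $\tfrac{b-a+1}{2}$ to avoid the non-positive integers, as is implicit in the statement.
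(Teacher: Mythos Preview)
Your proposal is correct and follows essentially the same approach as the paper: apply Corollary \ref{Corollary: main} to peel off the $(c+n,c)$ pair, then evaluate the resulting ${}_{2}F_{1}(a+k,b-m+k;\tfrac{a+b+1}{2}+k;\tfrac{1}{2})$ via a known closed form and simplify using $(a)_k=\Gamma(a+k)/\Gamma(a)$. The only difference is bibliographic: the paper cites \cite{Rakha} rather than \cite{Prudnikov3} for the ${}_{2}F_{1}$ evaluation, and your explicit tracking of the $k$-invariance of $\tfrac{b-a+1}{2}$ and the cancellation of $2^{-k}\cdot 2^{a+k-1}$ is more detailed than the paper's ``simplify the result''.
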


\begin{proof}
According to (\ref{Corollary_main}), we have%
\begin{eqnarray*}
&&_{3}F_{2}\left( \left.
\begin{array}{c}
a,b-m,c+n \\
\frac{a+b+1}{2},c%
\end{array}%
\right\vert \frac{1}{2}\right) \\
&=&\sum_{k=0}^{n}\binom{n}{k}\frac{\left( a\right) _{k}\left( b-m\right) _{k}%
}{\left( \frac{a+b+1}{2}\right) _{k}\left( c\right) _{k}}\,2^{-k}\,_{2}F_{1}%
\left( \left.
\begin{array}{c}
a+k,b-m+k \\
\frac{a+b+1}{2}+k%
\end{array}%
\right\vert \frac{1}{2}\right) .
\end{eqnarray*}%
Apply the reduction formula \cite{Rakha}:%
\begin{equation*}
_{2}F_{1}\left( \left.
\begin{array}{c}
a,b-m \\
\frac{a+b+1}{2}%
\end{array}%
\right\vert \frac{1}{2}\right) =\frac{2^{a-1}\,\Gamma \left( \frac{a+b+1}{2}%
\right) \,\Gamma \left( \frac{b-a+1}{2}-m\right) }{\Gamma \left( a\right)
\,\Gamma \left( \frac{b-a+1}{2}-m\right) }\sum_{s=0}^{m}\binom{m}{s}\frac{%
\left( -1\right) ^{s}\,\Gamma \left( \frac{a+s}{2}\right) }{\Gamma \left(
\frac{1+b+s}{2}-m\right) },
\end{equation*}%
and simplify the result to complete the proof.
\end{proof}

\bigskip

For the next result, we need to prove first the following Lemma.

\begin{lemma}
For $b\neq c$, the following finite sum holds true:%
\begin{eqnarray}
&&\sum_{k=0}^{n}\binom{n}{k}\left( -1\right) ^{k}\frac{\,\left( b\right) _{k}%
}{\left( c\right) _{k}}\psi \left( b+k\right)  \label{Sum_Psi} \\
&=&\frac{\left( c-b\right) _{n}}{\left( c\right) _{n}}\left[ \psi \left(
1+b-c\right) +\psi \left( b\right) -\psi \left( 1+b-c-n\right) \right] .
\notag
\end{eqnarray}
\end{lemma}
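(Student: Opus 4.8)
The plan is to obtain (\ref{Sum_Psi}) by differentiating a Chu--Vandermonde evaluation with respect to the parameter $b$. First I would note that, by (\ref{(-m)_k}), one has $(-n)_{k}/k!=(-1)^{k}\binom{n}{k}$, so that the finite sum $\sum_{k=0}^{n}\binom{n}{k}(-1)^{k}(b)_{k}/(c)_{k}$ is exactly $_{2}F_{1}(-n,b;c;1)$, which the Chu--Vandermonde formula (\ref{Chu_Vandermonde}) evaluates as $(c-b)_{n}/(c)_{n}$. Since this is a polynomial identity in $b$ (for fixed admissible $c$), it holds for all $b$ and may be differentiated term by term.

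Next I would differentiate both sides with respect to $b$. From (\ref{Pochhammer_def}) and the definition (\ref{Psi_def}) of $\psi$, the logarithmic derivative of a Pochhammer symbol is $\frac{d}{db}(b)_{k}=(b)_{k}\big[\psi(b+k)-\psi(b)\big]$, so the left-hand side becomes
$$\sum_{k=0}^{n}\binom{n}{k}(-1)^{k}\frac{(b)_{k}}{(c)_{k}}\big[\psi(b+k)-\psi(b)\big].$$
For the right-hand side I would first rewrite $(c-b)_{n}=(-1)^{n}(b-c-n+1)_{n}$ using (\ref{(-x)_n}), whence $\frac{d}{db}(c-b)_{n}=(-1)^{n}(b-c-n+1)_{n}\big[\psi(1+b-c)-\psi(1+b-c-n)\big]=(c-b)_{n}\big[\psi(1+b-c)-\psi(1+b-c-n)\big]$; this particular bookkeeping is precisely what makes the $\psi$-arguments emerge in the form stated in (\ref{Sum_Psi}).

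Finally I would solve for the desired sum: moving the $\psi(b)$ contribution to the other side and invoking the Chu--Vandermonde evaluation once more to write $\psi(b)\sum_{k=0}^{n}\binom{n}{k}(-1)^{k}(b)_{k}/(c)_{k}=\psi(b)\,(c-b)_{n}/(c)_{n}$, one arrives at
$$\sum_{k=0}^{n}\binom{n}{k}(-1)^{k}\frac{(b)_{k}}{(c)_{k}}\psi(b+k)=\frac{(c-b)_{n}}{(c)_{n}}\big[\psi(1+b-c)+\psi(b)-\psi(1+b-c-n)\big],$$
which is (\ref{Sum_Psi}). There is no real obstacle beyond care with the form of the answer: the hypothesis $b\neq c$ is exactly what rules out an indeterminate $0\cdot\infty$ between the factor $(c-b)_{n}/(c)_{n}$ and the combination $\psi(1+b-c)-\psi(1+b-c-n)$ on the right-hand side. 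Alternatively, one could skip the rewriting of $(c-b)_{n}$ and reach the same conclusion by applying the reflection formula $\psi(1-x)-\psi(x)=\pi\cot(\pi x)$, a consequence of (\ref{gamma_reflection}), together with the $\pi$-periodicity of the cotangent, to the pair $\psi(c-b),\psi(c-b+n)$.
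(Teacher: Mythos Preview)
Your argument is correct. Differentiating the Chu--Vandermonde identity
\[
\sum_{k=0}^{n}\binom{n}{k}(-1)^{k}\frac{(b)_{k}}{(c)_{k}}=\frac{(c-b)_{n}}{(c)_{n}}
\]
with respect to $b$, using $\frac{d}{db}(b)_{k}=(b)_{k}[\psi(b+k)-\psi(b)]$ on the left and your rewriting $(c-b)_{n}=(-1)^{n}(b-c-n+1)_{n}$ on the right, gives exactly (\ref{Sum_Psi}) after one further invocation of Chu--Vandermonde to absorb the $\psi(b)$ term. Your remark on $b\neq c$ is also accurate.

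This is, however, a genuinely different route from the paper's. The paper does not differentiate with respect to a parameter; instead it realises the sum $\sum_{k}\binom{n}{k}(-1)^{k}\frac{(b)_{k}}{(c)_{k}}\psi(b+k)$ as a piece of a ${}_{4}F_{3}$ evaluation. Concretely, it applies Corollary~\ref{Corollary: main} to ${}_{4}F_{3}\!\left(a,b,b,c+n;\,b+1,b+1,c;\,1\right)$, reduces each summand via the known formula (\ref{3F2_JL}) for ${}_{3}F_{2}(a,b,b;b+1,b+1;1)$, and then specialises $a=c$ so that the ${}_{4}F_{3}$ collapses to a ${}_{3}F_{2}$ that can be evaluated independently by (\ref{3F2_JL}); comparing the two expressions isolates the $\psi$-sum. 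Your approach is far more elementary and self-contained, needing nothing beyond Chu--Vandermonde and the logarithmic derivative of the Pochhammer symbol. The paper's approach is longer, but it is tailored to illustrate the paper's main identity and delivers the ${}_{4}F_{3}$ reduction (\ref{4F3(1)_reduction}) in the same breath, with the lemma emerging as a byproduct rather than the primary target.
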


\begin{proof}
Apply (\ref{Corollary: main}), to obtain%
\begin{eqnarray*}
&&_{4}F_{3}\left( \left.
\begin{array}{c}
a,b,b,c+n \\
b+1,b+1,c%
\end{array}%
\right\vert 1\right) \\
&=&\sum_{k=0}^{n}\binom{n}{k}\frac{\left( a\right) _{k}}{\left( c\right) _{k}%
}\left( \frac{\left( b\right) _{k}}{\left( b+1\right) _{k}}\right)
^{2}\,_{3}F_{2}\left( \left.
\begin{array}{c}
a+k,b+k,b+k \\
b+k+1,b+k+1%
\end{array}%
\right\vert 1\right) .
\end{eqnarray*}%
Taking into account (\ref{(a)_k/(a+1)_k}) and the reduction formula \cite%
{SumsJL},%
\begin{equation}
_{3}F_{2}\left( \left.
\begin{array}{c}
a,b,b \\
b+1,b+1%
\end{array}%
\right\vert 1\right) =b^{2}\,\mathrm{B}\left( 1-a,b\right) \left[ \psi
\left( 1+b-a\right) -\psi \left( b\right) \right] ,  \label{3F2_JL}
\end{equation}
after simplification, we arrive at%
\begin{eqnarray*}
&&_{4}F_{3}\left( \left.
\begin{array}{c}
a,b,b,c+n \\
b+1,b+1,c%
\end{array}%
\right\vert 1\right) \\
&=&b^{2}\,\mathrm{B}\left( 1-a,b\right) \sum_{k=0}^{n}\binom{n}{k}\frac{%
\left( a\right) _{k}\,\left( b\right) _{k}\left( 1-a\right) _{-k}}{\left(
c\right) _{k}}\left[ \psi \left( 1+b-a\right) -\psi \left( b+k\right) \right]
.
\end{eqnarray*}%
Now, applying (\ref{(alpha)_-n}), we obtain
\begin{eqnarray}
&&_{4}F_{3}\left( \left.
\begin{array}{c}
a,b,b,c+n \\
b+1,b+1,c%
\end{array}%
\right\vert 1\right) =b^{2}\,\mathrm{B}\left( 1-a,b\right)  \notag \\
&&\left[ \psi \left( 1+b-a\right) \sum_{k=0}^{n}\binom{n}{k}\left( -1\right)
^{k}\frac{\,\left( b\right) _{k}}{\left( c\right) _{k}}-\sum_{k=0}^{n}\binom{%
n}{k}\left( -1\right) ^{k}\frac{\,\left( b\right) _{k}}{\left( c\right) _{k}}%
\psi \left( b+k\right) \right]  \label{4F3_insert}
\end{eqnarray}%
However, according to (\ref{(-m)_k})\ and the Chu-Vandermonde summation
formula (\ref{Chu_Vandermonde}), we have
\begin{equation}
\sum_{k=0}^{n}\binom{n}{k}\left( -1\right) ^{k}\frac{\,\left( b\right) _{k}}{%
\left( c\right) _{k}}=\,_{2}F_{1}\left( \left.
\begin{array}{c}
-n,b \\
c%
\end{array}%
\right\vert 1\right) =\frac{\left( c-b\right) _{n}}{\left( c\right) _{n}},
\label{Chu_4F3}
\end{equation}%
hence, inserting (\ref{Chu_4F3})\ into (\ref{4F3_insert}) we obtain%
\begin{eqnarray}
&&_{4}F_{3}\left( \left.
\begin{array}{c}
a,b,b,c+n \\
b+1,b+1,c%
\end{array}%
\right\vert 1\right)  \label{4F3_sum} \\
&=&b^{2}\,\mathrm{B}\left( 1-a,b\right) \left[ \psi \left( 1+b-a\right)
\frac{\left( c-b\right) _{n}}{\left( c\right) _{n}}-\sum_{k=0}^{n}\binom{n}{k%
}\left( -1\right) ^{k}\frac{\left( b\right) _{k}}{\left( c\right) _{k}}\psi
\left( b+k\right) \right] .  \notag
\end{eqnarray}%
Notice that for $a=c$, (\ref{4F3_sum})\ reduces to%
\begin{eqnarray}
&&_{3}F_{2}\left( \left.
\begin{array}{c}
b,b,c+n \\
b+1,b+1%
\end{array}%
\right\vert 1\right)  \label{3F2_a} \\
&=&b^{2}\,\mathrm{B}\left( 1-c,b\right) \left[ \psi \left( 1+b-c\right)
\frac{\left( c-b\right) _{n}}{\left( c\right) _{n}}-\sum_{k=0}^{n}\binom{n}{k%
}\left( -1\right) ^{k}\frac{\left( b\right) _{k}}{\left( c\right) _{k}}\psi
\left( b+k\right) \right] ,  \notag
\end{eqnarray}%
but, according to (\ref{3F2_JL}), we have
\begin{equation}
_{3}F_{2}\left( \left.
\begin{array}{c}
b,b,c+n \\
b+1,b+1%
\end{array}%
\right\vert 1\right) =b^{2}\,\mathrm{B}\left( 1-c-n,b\right) \left[ \psi
\left( 1+b-c-n\right) -\psi \left( b\right) \right] .  \label{3F2_b}
\end{equation}%
Compare (\ref{3F2_a})\ with (\ref{3F2_b})\ to obtain%
\begin{eqnarray*}
&&\sum_{k=0}^{n}\binom{n}{k}\left( -1\right) ^{k}\frac{\,\left( b\right) _{k}%
}{\left( c\right) _{k}}\psi \left( b+k\right) \\
&=&\frac{\mathrm{B}\left( 1-c-n,b\right) }{\mathrm{B}\left( 1-c,b\right) }%
\left[ \psi \left( b\right) -\psi \left( 1+b-c-n\right) \right] +\psi \left(
1+b-c\right) \frac{\left( c-b\right) _{n}}{\left( c\right) _{n}}.
\end{eqnarray*}%
Finally, complete the proof applying the identity%
\begin{equation*}
\frac{\mathrm{B}\left( 1-c-n,b\right) }{\mathrm{B}\left( 1-c,b\right) }=%
\frac{\left( 1-c\right) _{-n}}{\left( 1-c+b\right) _{-n}}=\frac{\left(
c-b\right) _{n}}{\left( c\right) _{n}}.
\end{equation*}
\end{proof}

\begin{theorem}
For $a,b,c\in
\mathbb{C}
$ with $b\neq c$, and $n=0,1,2,\ldots $, the following reduction formula
holds true:%
\begin{eqnarray}
&&_{4}F_{3}\left( \left.
\begin{array}{c}
a,b,b,c+n \\
b+1,b+1,c%
\end{array}%
\right\vert 1\right) =b^{2}\,\mathrm{B}\left( 1-a,b\right)
\label{4F3(1)_reduction} \\
&&\frac{\left( c-b\right) _{n}}{\left( c\right) _{n}}\left[ \psi \left(
1+b-a\right) -\psi \left( 1+b-c\right) -\psi \left( b\right) +\psi \left(
1+b-c-n\right) \right] .  \notag
\end{eqnarray}
\end{theorem}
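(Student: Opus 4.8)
The plan is to combine the two intermediate identities already assembled in the proof of the preceding Lemma with the Lemma's final closed form. Specifically, equation (\ref{4F3_sum}) expresses the target $_{4}F_{3}$ at unit argument as
\[
b^{2}\,\mathrm{B}\left( 1-a,b\right) \left[ \psi \left( 1+b-a\right) \frac{\left( c-b\right) _{n}}{\left( c\right) _{n}}-\sum_{k=0}^{n}\binom{n}{k}\left( -1\right) ^{k}\frac{\left( b\right) _{k}}{\left( c\right) _{k}}\psi \left( b+k\right) \right] ,
\]
and the Lemma (\ref{Sum_Psi}) evaluates the remaining finite sum as $\frac{\left( c-b\right) _{n}}{\left( c\right) _{n}}\left[ \psi \left( 1+b-c\right) +\psi \left( b\right) -\psi \left( 1+b-c-n\right) \right] $. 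So the entire proof is: quote (\ref{4F3_sum}), substitute (\ref{Sum_Psi}) for the sum, and factor out $\frac{\left( c-b\right) _{n}}{\left( c\right) _{n}}$ from the bracket.

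First I would recall (\ref{4F3_sum}), which was derived en route to proving the Lemma by applying Corollary~\ref{Corollary: main} with $p=2$, $q=2$, parameters $a_{1}=a$, $a_{2}=a_{3}$ not present — rather the $_{3}F_{2}$ base function with numerator $a,b,b$ and denominator $b+1,b+1$ — together with the identity $\left( a\right) _{k}/\left( a+1\right) _{k}=a/\left( a+k\right) $ from (\ref{(a)_k/(a+1)_k}), the reduction formula (\ref{3F2_JL}), and the property (\ref{(alpha)_-n}); the Chu--Vandermonde evaluation (\ref{Chu_4F3}) then handles the $\psi$-free portion of the sum. All of this is already on the page, so I would simply cite equation (\ref{4F3_sum}).

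Then I would insert the Lemma. Writing $L:=\frac{\left( c-b\right) _{n}}{\left( c\right) _{n}}$, equation (\ref{4F3_sum}) becomes $b^{2}\,\mathrm{B}\left( 1-a,b\right) \left[ L\,\psi \left( 1+b-a\right) - L\left( \psi \left( 1+b-c\right) +\psi \left( b\right) -\psi \left( 1+b-c-n\right) \right) \right]$. Factoring $L$ out of the bracket yields
\[
b^{2}\,\mathrm{B}\left( 1-a,b\right) \frac{\left( c-b\right) _{n}}{\left( c\right) _{n}}\left[ \psi \left( 1+b-a\right) -\psi \left( 1+b-c\right) -\psi \left( b\right) +\psi \left( 1+b-c-n\right) \right] ,
\]
which is exactly (\ref{4F3(1)_reduction}). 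This is the complete argument; no genuine obstacle remains, since the only two ingredients — (\ref{4F3_sum}) and the Lemma — were established immediately above. The one point requiring a word of care is the hypothesis $b\neq c$ (so that $\mathrm{B}\left( 1-c,b\right) $ and the ratio of beta functions used in the Lemma are well defined, and so that $\left( c\right) _{n}\neq 0$ in the denominator); this is why it is carried over into the statement of the theorem. If desired one could also remark that the case $a=c$ recovers (\ref{3F2_b}) as a consistency check, but that is optional.
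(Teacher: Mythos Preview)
Your proposal is correct and matches the paper's own proof essentially line for line: the paper simply says ``Insert (\ref{Sum_Psi}) into (\ref{4F3_sum}) to arrive at the desired result,'' which is exactly the substitution-and-factor step you describe. Your additional remarks on the derivation of (\ref{4F3_sum}) and the role of the hypothesis $b\neq c$ are accurate elaborations but not new ideas.
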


\begin{proof}
Insert (\ref{Sum_Psi})\ into (\ref{4F3_sum})\ to arrive at the desired
result.
\end{proof}

\bigskip

Note that the case $b=c$ is not included in (\ref{Sum_Psi}). Next, we derive
this case.

\bigskip

\begin{lemma}
For $n=1,2,\ldots $ the following finite sum holds true:%
\begin{equation}
\sum_{k=0}^{n}\binom{n}{k}\left( -1\right) ^{k}\,\psi \left( b+k\right) =-%
\frac{\left( n-1\right) !}{\left( b\right) _{n}}.  \label{Sum_Psi_simple}
\end{equation}
\end{lemma}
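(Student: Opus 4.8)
The plan is to obtain \eqref{Sum_Psi_simple} as the limiting case $c\to b$ of the identity \eqref{Sum_Psi}, since the hypothesis $b\neq c$ there excludes exactly the case we now need. Writing $c=b+\varepsilon$ and letting $\varepsilon\to 0$, the left-hand side of \eqref{Sum_Psi} converges termwise to $\sum_{k=0}^{n}\binom{n}{k}(-1)^{k}\psi(b+k)$ (each ratio $(b)_k/(b+\varepsilon)_k\to 1$), so the whole work is in evaluating the limit of the right-hand side
\[
\frac{(c-b)_{n}}{(c)_{n}}\left[\psi(1+b-c)+\psi(b)-\psi(1+b-c-n)\right]
=\frac{(\varepsilon)_{n}}{(b+\varepsilon)_{n}}\left[\psi(1+\varepsilon)+\psi(b)-\psi(1+\varepsilon-n)\right].
\]

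The key observation is that $(\varepsilon)_{n}=\varepsilon(1+\varepsilon)_{n-1}\to 0$ as $\varepsilon\to 0$ for $n\geq 1$, so we need a compensating pole of order one inside the bracket. That pole is supplied by $\psi(1+\varepsilon-n)$: since $1-n$ is a non-positive integer for $n\geq 1$, the psi function has a simple pole there. Using the reflection formula for $\psi$, or directly the expansion $\psi(1+\varepsilon-n)=-\tfrac{1}{\varepsilon}+O(1)$ near the pole, one gets $(\varepsilon)_{n}\,\psi(1+\varepsilon-n)\to -(1)_{n-1}/(b)_{n}\cdot\varepsilon/\varepsilon$ after also sending $(b+\varepsilon)_n\to(b)_n$; more precisely $(\varepsilon)_{n}/(b+\varepsilon)_n\sim \varepsilon\,(n-1)!/(b)_n$ and $\psi(1+\varepsilon-n)\sim -1/\varepsilon$, whose product tends to $-(n-1)!/(b)_n$. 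The remaining two terms $\psi(1+\varepsilon)+\psi(b)$ stay bounded and are killed by the vanishing prefactor $(\varepsilon)_n\to 0$. Collecting, the right-hand side tends to $-(n-1)!/(b)_n$, which is exactly \eqref{Sum_Psi_simple}.

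The main obstacle — really the only delicate point — is getting the residue of $\psi$ at the non-positive integer right, including the sign, and checking that the prefactor $(\varepsilon)_n/(b+\varepsilon)_n$ really does vanish to exactly first order (not higher) so that the product is finite and nonzero; the factor $(1+\varepsilon)_{n-1}$ in $(\varepsilon)_n$ is what makes this work, contributing $(n-1)!$ in the limit. One should also note the case $n=1$ separately as a sanity check: there the sum is $\psi(b)-\psi(b+1)=-1/b$, matching $-(0)!/(b)_1=-1/b$.

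An alternative, self-contained route that avoids the limiting argument altogether is induction on $n$ using the telescoping identity $\binom{n}{k}=\binom{n-1}{k}+\binom{n-1}{k-1}$ together with $\psi(b+k)=\psi(b+1+(k-1))$; this rewrites $\sum_{k}\binom{n}{k}(-1)^k\psi(b+k)$ in terms of the $n-1$ sums at arguments $b$ and $b+1$, and the recurrence $\psi(b+1)=\psi(b)+1/b$ plus the induction hypothesis closes the computation, with the base case $n=1$ as above. I would present the limiting proof as the primary one since it is shorter and reuses \eqref{Sum_Psi} directly, mentioning the inductive proof only if a referee wants independence from the $b\neq c$ lemma.
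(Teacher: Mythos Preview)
Your limiting argument is sound and gives a shorter route than the paper's, but watch a sign slip: with $c=b+\varepsilon$ one has $1+b-c=1-\varepsilon$, not $1+\varepsilon$, so the singular term is $\psi(1-\varepsilon-n)\sim 1/\varepsilon$ and it is the \emph{minus} sign in front of it inside the bracket that produces the final $-(n-1)!/(b)_n$; as you wrote it, two sign errors happen to cancel. The paper follows a different path: rather than sending $c\to b$ in \eqref{Sum_Psi}, it sets $c=b$ in the reduction \eqref{4F3(1)_reduction}, specializes further to $a=b$, and thereby obtains two independent closed forms for $_3F_2\!\left(\left.\begin{smallmatrix}b,\,b,\,b+n\\ b+1,\,b+1\end{smallmatrix}\right|1\right)$, one containing the unknown sum and one coming directly from \eqref{3F2_JL}; comparing them and using $\psi(1-n)/\Gamma(1-n)\to(-1)^{n}(n-1)!$ yields the result. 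Your approach is more economical since it reuses \eqref{Sum_Psi} verbatim and needs only the residue of $\psi$ at a non-positive integer; the paper's approach has the side benefit of producing the intermediate identity \eqref{3F2_simple_sum}, which is immediately recycled to prove the next corollary \eqref{3F2(1)_corollary}. Your inductive alternative is also valid and entirely self-contained.
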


\begin{proof}
Take $c=b$ in (\ref{4F3(1)_reduction})\ to obtain
\begin{equation}
_{3}F_{2}\left( \left.
\begin{array}{c}
a,b,b+n \\
b+1,b+1%
\end{array}%
\right\vert 1\right) =-b^{2}\,\mathrm{B}\left( 1-a,b\right) \sum_{k=0}^{n}%
\binom{n}{k}\left( -1\right) ^{k}\,\psi \left( b+k\right) .
\label{3F2_simple_sum}
\end{equation}%
Further, take $a=b$, thus%
\begin{equation}
_{3}F_{2}\left( \left.
\begin{array}{c}
b,b,b+n \\
b+1,b+1%
\end{array}%
\right\vert 1\right) =-b^{2}\,\mathrm{B}\left( 1-b,b\right) \sum_{k=0}^{n}%
\binom{n}{k}\left( -1\right) ^{k}\,\psi \left( b+k\right) .
\label{3F2_simple_a}
\end{equation}%
However, from (\ref{3F2_JL}), we have%
\begin{equation}
_{3}F_{2}\left( \left.
\begin{array}{c}
b,b,b+n \\
b+1,b+1%
\end{array}%
\right\vert 1\right) =b^{2}\,\mathrm{B}\left( 1-b-n,b\right) \left[ \psi
\left( 1-n\right) -\psi \left( b\right) \right] .  \label{3F2_simple_b}
\end{equation}%
Compare (\ref{3F2_simple_a})\ with (\ref{3F2_simple_b})\ and take into
account (\ref{beta_def}) and (\ref{(alpha)_-n}) to arrive at%
\begin{equation*}
\sum_{k=0}^{n}\binom{n}{k}\left( -1\right) ^{k}\,\psi \left( b+k\right) =-%
\frac{\left( -1\right) ^{n}}{\left( b\right) _{n}}\left[ \frac{\psi \left(
1-n\right) }{\Gamma \left( 1-n\right) }-\frac{\psi \left( b\right) }{\Gamma
\left( 1-n\right) }\right] .
\end{equation*}%
Finally, for $m=0,1,2,\ldots $, apply \cite[Eqn. 1.1.5\&1.3.2]{Lebedev}
\begin{eqnarray*}
\lim_{z\rightarrow -m}\Gamma \left( z\right) &=&\frac{\left( -1\right) ^{m}}{%
m!}\lim_{z\rightarrow -m}\frac{1}{z+m}, \\
\lim_{z\rightarrow -m}\psi \left( z\right) &=&-\lim_{z\rightarrow -m}\frac{1%
}{z+m},
\end{eqnarray*}%
to complete the proof.
\end{proof}

\begin{corollary}
Insert (\ref{Sum_Psi_simple})\ into (\ref{3F2_simple_sum})\ to obtain, for $%
n=1,2,\ldots $%
\begin{equation}
_{3}F_{2}\left( \left.
\begin{array}{c}
a,b,b+n \\
b+1,b+1%
\end{array}%
\right\vert 1\right) =\left( n-1\right) !\frac{b^{2}\,\mathrm{B}\left(
1-a,b\right) }{\left( b\right) _{n}}.  \label{3F2(1)_corollary}
\end{equation}
\end{corollary}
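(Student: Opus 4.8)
The plan is essentially a one-line substitution, since both ingredients have already been assembled in the excerpt. In the course of proving the previous Lemma, setting $c=b$ in the reduction formula (\ref{4F3(1)_reduction}) produced the identity (\ref{3F2_simple_sum}), which already expresses the target $_3F_2$ in closed form up to the finite sum $\sum_{k=0}^{n}\binom{n}{k}(-1)^{k}\psi(b+k)$. That finite sum is exactly what Lemma (\ref{Sum_Psi_simple}) evaluates. So the entire argument is: quote (\ref{3F2_simple_sum}), then insert the evaluation of the psi-sum.

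Concretely, I would first recall
\[
_{3}F_{2}\left( \left.\begin{array}{c} a,b,b+n \\ b+1,b+1 \end{array}\right\vert 1\right) = -b^{2}\,\mathrm{B}\left(1-a,b\right)\sum_{k=0}^{n}\binom{n}{k}\left(-1\right)^{k}\,\psi\left(b+k\right)
\]
from (\ref{3F2_simple_sum}), valid for $n=1,2,\ldots$. Then substitute $\sum_{k=0}^{n}\binom{n}{k}(-1)^{k}\psi(b+k)=-(n-1)!/(b)_{n}$ from (\ref{Sum_Psi_simple}). The two minus signs cancel, leaving $_{3}F_{2}=(n-1)!\,b^{2}\,\mathrm{B}(1-a,b)/(b)_{n}$, which is precisely the asserted formula (\ref{3F2(1)_corollary}); no algebraic simplification beyond the sign cancellation is required.

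There is no real obstacle here, which is why this is stated as a corollary rather than a theorem: both components are already proved, and the result is immediate from their combination. The only points worth a sentence of care are the hypotheses carried over from the two inputs — the restriction $n\geq 1$ comes from Lemma (\ref{Sum_Psi_simple}) (the $(n-1)!$ and the singular-gamma limiting argument in its proof both need $n\geq 1$), and the factor $\mathrm{B}(1-a,b)$ should be read, as elsewhere in this section, via the gamma-function representation (\ref{beta_def}) and analytic continuation when $1-a$ or $b$ is a non-positive integer.
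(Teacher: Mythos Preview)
Your proposal is correct and is exactly the paper's approach: the corollary's own statement already says ``insert (\ref{Sum_Psi_simple}) into (\ref{3F2_simple_sum}),'' and you carry out precisely that substitution with the obvious sign cancellation. There is nothing to add.
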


\begin{theorem}
For $n=1,2,\ldots $ and $m=0,1,2,\ldots $The following reduction formula
holds true:%
\begin{equation}
_{4}F_{3}\left( \left.
\begin{array}{c}
a,b,b+n,c+m \\
b+1,b+1,c%
\end{array}%
\right\vert 1\right) =\left( n-1\right) !\frac{\,b^{2}\mathrm{B}\left(
1-a,b\right) }{\left( b\right) _{n}}\frac{\left( c-b\right) _{m}}{\left(
c\right) _{m}}.  \label{4F3(1)_theorem}
\end{equation}
\end{theorem}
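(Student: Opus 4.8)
The plan is to strip off the parameter pair $c+m$ over $c$ by means of Corollary \ref{Corollary: main} and then substitute the reduction formula (\ref{3F2(1)_corollary}) into each term of the resulting finite sum. Applying (\ref{Corollary_main}) with $z=1$, with $n$ replaced by $m$, and with top parameters $a,b,b+n$ over bottom parameters $b+1,b+1$, one gets
\begin{equation*}
_{4}F_{3}\left( \left.
\begin{array}{c}
a,b,b+n,c+m \\
b+1,b+1,c
\end{array}
\right\vert 1\right) =\sum_{k=0}^{m}\binom{m}{k}\frac{\left( a\right) _{k}\left( b\right) _{k}\left( b+n\right) _{k}}{\left( b+1\right) _{k}^{2}\left( c\right) _{k}}\,_{3}F_{2}\left( \left.
\begin{array}{c}
a+k,b+k,b+n+k \\
b+1+k,b+1+k
\end{array}
\right\vert 1\right) .
\end{equation*}
Since $n\geq 1$, (\ref{3F2(1)_corollary}) (with $a\mapsto a+k$ and $b\mapsto b+k$) evaluates the inner $_{3}F_{2}$ as $\left( n-1\right) !\,\left( b+k\right) ^{2}\mathrm{B}\left( 1-a-k,b+k\right) /\left( b+k\right) _{n}$.

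The heart of the proof is then the simplification of the summand, which rests on three elementary identities. Using (\ref{(a)_k/(a+1)_k}) (or (\ref{Pochhammer_def})) one verifies $\left( b\right) _{k}\left( b+k\right) ^{2}/\left( b+1\right) _{k}^{2}=b^{2}/\left( b\right) _{k}$; by (\ref{Pochhammer_def}) the gamma quotients telescope to $\left( b+n\right) _{k}/\left[ \left( b\right) _{k}\left( b+k\right) _{n}\right] =1/\left( b\right) _{n}$; and writing the beta function through (\ref{beta_def}) gives $\mathrm{B}\left( 1-a-k,b+k\right) =\Gamma \left( 1-a-k\right) \Gamma \left( b+k\right) /\Gamma \left( 1-a+b\right) $, whose denominator is independent of $k$, so that dividing by $\mathrm{B}\left( 1-a,b\right) $ and using (\ref{(alpha)_-n}) in the form $\left( 1-a\right) _{-k}=\left( -1\right) ^{k}/\left( a\right) _{k}$ yields $\mathrm{B}\left( 1-a-k,b+k\right) =\left( -1\right) ^{k}\left( b\right) _{k}\mathrm{B}\left( 1-a,b\right) /\left( a\right) _{k}$. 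Combining these, all the $a$- and $n$-dependence factors out of the sum, leaving
\begin{equation*}
_{4}F_{3}\left( \left.
\begin{array}{c}
a,b,b+n,c+m \\
b+1,b+1,c
\end{array}
\right\vert 1\right) =\left( n-1\right) !\,\frac{b^{2}\mathrm{B}\left( 1-a,b\right) }{\left( b\right) _{n}}\sum_{k=0}^{m}\binom{m}{k}\left( -1\right) ^{k}\frac{\left( b\right) _{k}}{\left( c\right) _{k}}.
\end{equation*}

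To conclude, I would invoke the Chu--Vandermonde evaluation already recorded in (\ref{Chu_4F3}), namely $\sum_{k=0}^{m}\binom{m}{k}\left( -1\right) ^{k}\left( b\right) _{k}/\left( c\right) _{k}=\left( c-b\right) _{m}/\left( c\right) _{m}$, which gives the announced closed form. The step requiring the most care is the algebraic simplification: the three Pochhammer/gamma identities have to be applied in the right combination, and the observation that makes the remaining sum collapse to Chu--Vandermonde is precisely that $\Gamma \left( \left( 1-a-k\right) +\left( b+k\right) \right) =\Gamma \left( 1-a+b\right) $ carries no $k$-dependence. No convergence issues intervene, since every series involved either terminates or is evaluated at $z=1$ under the usual restrictions on the parameters.
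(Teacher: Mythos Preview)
Your proof is correct and follows essentially the same route as the paper: apply Corollary~\ref{Corollary: main} to peel off the pair $c+m$ over $c$, insert (\ref{3F2(1)_corollary}) for the inner $_{3}F_{2}$, simplify the Pochhammer and beta factors, and finish with the Chu--Vandermonde identity (\ref{Chu_4F3}). Your algebraic bookkeeping is in fact cleaner than the paper's intermediate display, which carries a typographical $\left( a\right)_{k}$ where $\left( b\right)_{k}$ is meant; your version gets this right and makes the final application of (\ref{Chu_4F3}) transparent.
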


\begin{proof}
Application of (\ref{Corollary_main}) leads to%
\begin{eqnarray*}
&&_{4}F_{3}\left( \left.
\begin{array}{c}
a,b,b+n,c+m \\
b+1,b+1,c%
\end{array}%
\right\vert 1\right) \\
&=&\sum_{k=0}^{m}\binom{m}{k}\frac{\left( a\right) _{k}\left( b\right)
_{k}\left( b+n\right) _{k}}{\left[ \left( b+1\right) _{k}\right] ^{2}\left(
c\right) _{k}}\,_{3}F_{2}\left( \left.
\begin{array}{c}
a+k,b+k,b+n+k \\
b+1+k,b+1+k%
\end{array}%
\right\vert 1\right) .
\end{eqnarray*}%
Take into account (\ref{3F2(1)_corollary}) and simplify the result to arrive
at%
\begin{eqnarray}
&&_{4}F_{3}\left( \left.
\begin{array}{c}
a,b,b+n,c+m \\
b+1,b+1,c%
\end{array}%
\right\vert 1\right)  \label{4F3(1)_intermedio} \\
&=&\left( n-1\right) !\frac{b\,\Gamma \left( b+1\right) }{\Gamma \left(
1-a+b\right) }\sum_{k=0}^{m}\binom{m}{k}\frac{\left( a\right) _{k}\left(
b+n\right) _{k}}{\left( c\right) _{k}\left( b+k\right) _{n}}\,\Gamma \left(
1-a-k\right) .  \notag
\end{eqnarray}%
Note that, according to (\ref{Pochhammer_def}),%
\begin{equation*}
\frac{\left( b+n\right) _{k}}{\left( b+k\right) _{n}}=\frac{\left( b\right)
_{k}}{\left( b\right) _{n}},
\end{equation*}%
and according to (\ref{gamma_reflection})%
\begin{equation*}
\frac{\Gamma \left( 1-a-k\right) \,\Gamma \left( a+k\right) }{\Gamma \left(
a\right) \,\Gamma \left( 1-a\right) }=\left( -1\right) ^{k},
\end{equation*}%
thus (\ref{4F3(1)_intermedio})\ reduces to%
\begin{eqnarray*}
&&_{4}F_{3}\left( \left.
\begin{array}{c}
a,b,b+n,c+m \\
b+1,b+1,c%
\end{array}%
\right\vert 1\right) \\
&=&\left( n-1\right) !\frac{\,b^{2}\mathrm{B}\left( 1-a,b\right) }{\left(
b\right) _{n}}\sum_{k=0}^{m}\binom{m}{k}\left( -1\right) ^{k}\frac{\left(
a\right) _{k}}{\left( c\right) _{k}}.
\end{eqnarray*}%
Finally, apply (\ref{Chu_4F3})\ to complete the proof.
\end{proof}

\section{Application to $_{p}F_{p+1}$ and $_{p}F_{p}$ reduction formulas
with $p=1,2,3$ and arbitrary $z$}

\label{Section: finite p and arbitrary z}

\begin{theorem}
For $n=0,1,2,\ldots $ the following reduction formula holds true:%
\begin{equation}
_{1}F_{2}\left( \left.
\begin{array}{c}
c+n \\
b,c%
\end{array}%
\right\vert z\right) =z^{\left( 1-b\right) /2}\,\Gamma \left( b\right)
\sum_{k=0}^{n}\binom{n}{k}\frac{z^{k/2}}{\left( c\right) _{k}}%
\,I_{b+k-1}\left( 2\sqrt{z}\right) ,  \label{1F2_theorem}
\end{equation}%
where $I_{\nu }\left( z\right) $ denotes the modified Bessel function of the
first kind \cite[Eqn. 5.7.1]{Lebedev}.
\end{theorem}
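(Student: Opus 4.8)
The plan is to specialize the reduction formula of Corollary~\ref{Corollary: main} to $p=0$, $q=1$ with the single lower parameter $b_{1}=b$. Substituting these values into \eqref{Corollary_main} gives at once
\begin{equation*}
{}_{1}F_{2}\left( \left.\begin{array}{c} c+n \\ b,c\end{array}\right\vert z\right)
=\sum_{k=0}^{n}\binom{n}{k}\frac{z^{k}}{\left( b\right) _{k}\,\left( c\right) _{k}}\;{}_{0}F_{1}\left( \left.\begin{array}{c} - \\ b+k\end{array}\right\vert z\right),
\end{equation*}
so the problem reduces to expressing each ${}_{0}F_{1}$ on the right in terms of a modified Bessel function.

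To do this I would invoke the classical series representation of $I_{\nu}$, namely ${}_{0}F_{1}\left(\left.\begin{array}{c}-\\ \nu+1\end{array}\right\vert w\right)=\Gamma(\nu+1)\,w^{-\nu/2}\,I_{\nu}\!\left(2\sqrt{w}\right)$, which follows directly from \cite[Eqn. 5.7.1]{Lebedev} and the definition \eqref{Hypergeometric_sum}. Taking $\nu=b+k-1$ and $w=z$ yields
\begin{equation*}
{}_{0}F_{1}\left( \left.\begin{array}{c} - \\ b+k\end{array}\right\vert z\right)=\Gamma(b+k)\,z^{-(b+k-1)/2}\,I_{b+k-1}\!\left(2\sqrt{z}\right).
\end{equation*}
Inserting this into the finite sum, everything that remains is bookkeeping: by \eqref{Pochhammer_def} the quotient $\Gamma(b+k)/\left( b\right)_{k}$ collapses to $\Gamma(b)$, and collecting powers of $z$ one has $z^{k}\,z^{-(b+k-1)/2}=z^{(1-b)/2}\,z^{k/2}$. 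Factoring $\Gamma(b)\,z^{(1-b)/2}$ out of the sum produces exactly \eqref{1F2_theorem}.

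I do not anticipate any real obstacle: the argument is a one-line application of Corollary~\ref{Corollary: main} followed by the standard ${}_{0}F_{1}$--Bessel dictionary. The only points needing a moment's care are the exponent arithmetic for the powers of $z$ and the cancellation $\Gamma(b+k)/\left( b\right)_{k}=\Gamma(b)$, both of which are routine. It is worth remarking that the formula then extends to all admissible $b$ (i.e.\ $b$ not a non-positive integer, so that the ${}_{1}F_{2}$ and the symbols $\left( b\right)_{k}$ make sense) by analytic continuation in $z$.
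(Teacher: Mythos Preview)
Your proposal is correct and follows essentially the same route as the paper: apply Corollary~\ref{Corollary: main} with $p=0$, $q=1$, $b_{1}=b$, then replace each ${}_{0}F_{1}\!\left(\left.\begin{array}{c}-\\ b+k\end{array}\right\vert z\right)$ by its modified-Bessel form and simplify using $\Gamma(b+k)/(b)_{k}=\Gamma(b)$. The paper's proof is identical, citing \cite[Eqn.~7.13.1(1)]{Prudnikov3} for the ${}_{0}F_{1}$--$I_{\nu}$ identity rather than \cite{Lebedev}.
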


\begin{proof}
According to (\ref{Corollary_main}), we have%
\begin{equation*}
_{1}F_{2}\left( \left.
\begin{array}{c}
c+n \\
b,c%
\end{array}%
\right\vert z\right) =\sum_{k=0}^{n}\binom{n}{k}\frac{z^{k}}{\left( b\right)
_{k}\left( c\right) _{k}}\,_{0}F_{1}\left( \left.
\begin{array}{c}
- \\
b+k%
\end{array}%
\right\vert z\right) .
\end{equation*}%
Apply the reduction formula \cite[Eqn. 7.13.1(1)]{Prudnikov3}:
\begin{equation}
_{0}F_{1}\left( \left.
\begin{array}{c}
- \\
b%
\end{array}%
\right\vert z\right) =z^{\left( 1-b\right) /2}\,\Gamma \left( b\right)
\,I_{b-1}\left( 2\sqrt{z}\right) ,  \label{0F1_reduction}
\end{equation}%
and simplify the result to complete the proof.
\end{proof}

\bigskip

We obtain an alternative proof of (\ref{1F2_theorem})\ as follows.

\bigskip

\begin{proof}
From (\ref{0F1_reduction}), we have%
\begin{equation*}
\frac{d^{n}}{dz^{n}}\left[ z^{c-1}\,_{0}F_{1}\left( \left.
\begin{array}{c}
- \\
b%
\end{array}%
\right\vert z\right) \right] =\Gamma \left( b\right) \,\frac{d^{n}}{dz^{n}}%
\left[ z^{c-1}\,z^{\left( 1-b\right) /2}\,I_{b-1}\left( 2\sqrt{z}\right) %
\right] .
\end{equation*}%
Apply Leibniz's differentiation formula (\ref{Leibniz}), and the
hypergeometric differentiation formula (\ref{Diff_pFq_general}),\ as well as
\cite[Eqn. 1.13.1(5)]{Brychov}:
\begin{equation*}
\frac{d^{n}}{dz^{n}}\left[ z^{\pm \nu /2}\,I_{\nu }\left( a\sqrt{z}\right) %
\right] =\left( \frac{a}{2}\right) ^{n}z^{\left( \pm \nu -n\right)
/2}\,I_{\nu \mp n}\left( a\sqrt{z}\right) ,
\end{equation*}%
to arrive at%
\begin{eqnarray*}
&&_{1}F_{2}\left( \left.
\begin{array}{c}
c \\
b,c-n%
\end{array}%
\right\vert z\right) \\
&=&\left( -1\right) ^{n}\frac{\Gamma \left( b\right) \,z^{\left( 1-b\right)
/2}}{\left( c-n\right) _{n}}\sum_{k=0}^{n}\binom{n}{k}\left( -1\right)
^{k}\left( 1-c\right) _{n-k}\,z^{k/2}\,I_{b+k-1}\left( 2\sqrt{z}\right) .
\end{eqnarray*}%
Perform the change of variables $c\rightarrow c+n$ and take into account the
identity:%
\begin{equation*}
\left( -1\right) ^{n+k}\frac{\left( 1-c-n\right) _{n-k}}{\left( c\right) _{n}%
}=\frac{1}{\left( c\right) _{k}},
\end{equation*}%
to obtain (\ref{1F2_theorem}), as we wanted to prove.
\end{proof}

\begin{theorem}
For $n,m=0,1,2,\ldots $the following reduction formula holds true:%
\begin{eqnarray}
&&_{2}F_{3}\left( \left.
\begin{array}{c}
c+n,d+m \\
b,c,d%
\end{array}%
\right\vert z\right)  \label{2F3_theorem} \\
&=&z^{\left( 1-b\right) /2}\Gamma \left( b\right) \,\Gamma \left( d\right)
\sum_{k=0}^{n}\binom{n}{k}\frac{\left( d+m\right) _{k}\,}{\left( c\right)
_{k}}\sum_{\ell =0}^{m}\binom{m}{\ell }\frac{z^{\left( k+\ell \right) /2}}{%
\Gamma \left( d+k+\ell \right) }\,I_{b+k+\ell -1}\left( 2\sqrt{z}\right) .
\notag
\end{eqnarray}
\end{theorem}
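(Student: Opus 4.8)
The plan is to reduce the $_2F_3$ to a sum of $_1F_2$'s by means of Corollary \ref{Corollary: main}, and then to evaluate each of those $_1F_2$'s with the reduction formula (\ref{1F2_theorem}), which is already of the required shape. Concretely, in (\ref{Corollary_main}) I would take the single numerator parameter $a_1=d+m$, the two denominator parameters $b_1=b$, $b_2=d$, and the distinguished pair $c+n$ over $c$ (so $p=1$, $q=2$). This gives
\begin{equation*}
_{2}F_{3}\left( \left.\begin{array}{c} c+n,d+m \\ b,c,d \end{array}\right\vert z\right)=\sum_{k=0}^{n}\binom{n}{k}\frac{\left( d+m\right) _{k}\,z^{k}}{\left( b\right) _{k}\left( d\right) _{k}\left( c\right) _{k}}\,_{1}F_{2}\left( \left.\begin{array}{c} d+k+m \\ b+k,\,d+k \end{array}\right\vert z\right).
\end{equation*}

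Next, each inner function is an instance of (\ref{1F2_theorem}) under the substitutions $c\mapsto d+k$, $b\mapsto b+k$, $n\mapsto m$, with summation index $\ell$, namely
\begin{equation*}
_{1}F_{2}\left( \left.\begin{array}{c} d+k+m \\ b+k,\,d+k \end{array}\right\vert z\right)=z^{\left( 1-b-k\right) /2}\,\Gamma \left( b+k\right) \sum_{\ell =0}^{m}\binom{m}{\ell }\frac{z^{\ell /2}}{\left( d+k\right) _{\ell }}\,I_{b+k+\ell -1}\left( 2\sqrt{z}\right).
\end{equation*}
Substituting this into the previous display produces the announced double sum over $k$ and $\ell$.

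What remains is bookkeeping. For the powers of $z$ one has $z^{k}\,z^{\left( 1-b-k\right) /2}\,z^{\ell /2}=z^{\left( 1-b\right) /2}\,z^{\left( k+\ell \right) /2}$; from (\ref{Pochhammer_def}) one has $\Gamma \left( b+k\right) /\left( b\right) _{k}=\Gamma \left( b\right) $; and from (\ref{Prop_2}) together with (\ref{Pochhammer_def}) one has $\left( d\right) _{k}\left( d+k\right) _{\ell }=\left( d\right) _{k+\ell }=\Gamma \left( d+k+\ell \right) /\Gamma \left( d\right) $, so that $1/\left[ \left( d\right) _{k}\left( d+k\right) _{\ell }\right] =\Gamma \left( d\right) /\Gamma \left( d+k+\ell \right) $. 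Collecting these factors yields exactly (\ref{2F3_theorem}).

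I do not anticipate any real obstacle: all the content sits in the two reductions above, and the only points needing care are keeping the three $z$-exponents straight and carrying out the Pochhammer absorption $1/[(d)_{k}(d+k)_{\ell}]=\Gamma(d)/\Gamma(d+k+\ell)$. One could alternatively reach the same conclusion by applying Corollary \ref{Corollary: main} twice — once to the pair $\{c+n,c\}$ and once to $\{d+m,d\}$ — thereby reducing the $_2F_3$ to a double sum of $_0F_1$'s and then invoking (\ref{0F1_reduction}); this route is slightly longer but completely parallel.
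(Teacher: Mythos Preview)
Your proposal is correct and follows exactly the paper's own approach: apply (\ref{Corollary_main}) with the pair $\{c+n,c\}$ to reduce the $_2F_3$ to a sum of $_1F_2$'s, then invoke (\ref{1F2_theorem}) on each summand and simplify. Your bookkeeping of the $z$-exponents and the Pochhammer identities $\Gamma(b+k)/(b)_k=\Gamma(b)$ and $(d)_k(d+k)_\ell=\Gamma(d+k+\ell)/\Gamma(d)$ is precisely the simplification the paper leaves implicit.
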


\begin{proof}
According to (\ref{Corollary_main}), we have
\begin{eqnarray*}
&&_{2}F_{3}\left( \left.
\begin{array}{c}
c+n,d+m \\
b,c,d%
\end{array}%
\right\vert z\right) \\
&=&\sum_{k=0}^{n}\binom{n}{k}\frac{\left( d+m\right) _{k}\,z^{k}}{\left(
b\right) _{k}\left( c\right) _{k}\left( d\right) _{k}}\,_{1}F_{2}\left(
\left.
\begin{array}{c}
d+m+k \\
b+k,d+k%
\end{array}%
\right\vert z\right) .
\end{eqnarray*}%
Apply (\ref{1F2_theorem})\ and simplify the result to complete the proof.
\end{proof}

\begin{remark}
The reduction formulas (\ref{1F2_theorem})\ and (\ref{2F3_theorem})\ can be
rewritten in terms of the Bessel function of the first find $J_{\nu }\left(
z\right) $ taking into account the reduction formula \cite[Eqn. 7.13.1(1)]%
{Prudnikov3}:%
\begin{equation*}
_{0}F_{1}\left( \left.
\begin{array}{c}
- \\
b%
\end{array}%
\right\vert -z\right) =z^{\left( 1-b\right) /2}\,\Gamma \left( b\right)
\,J_{b-1}\left( 2\sqrt{z}\right) .
\end{equation*}
\end{remark}

\begin{theorem}
For $n=0,1,2,\ldots $the following reduction formula holds true:%
\begin{equation*}
_{2}F_{2}\left( \left.
\begin{array}{c}
a,c+n \\
a+1,c%
\end{array}%
\right\vert z\right) =a\left( -z\right) ^{a}\sum_{k=0}^{n}\binom{n}{k}\left(
-1\right) ^{k}\,\frac{\gamma \left( a+k,-z\right) }{\left( c\right) _{k}},
\end{equation*}%
where $\gamma \left( \nu ,z\right) $ denotes the incomplete gamma function
\cite[Eqn. 45:3:1]{Atlas}.
\end{theorem}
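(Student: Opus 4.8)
The approach parallels the proof of~(\ref{3F2_(1/2)_0}): first peel off the trailing pair of parameters $c+n,c$ with Corollary~\ref{Corollary: main}, and then evaluate the resulting confluent hypergeometric functions in closed form. Concretely, I would apply~(\ref{Corollary_main}) with $p=q=1$, $a_{1}=a$, $b_{1}=a+1$, which gives
\[
{}_{2}F_{2}\left(\left.\begin{array}{c}a,c+n\\a+1,c\end{array}\right\vert z\right)=\sum_{k=0}^{n}\binom{n}{k}\frac{(a)_{k}\,z^{k}}{(a+1)_{k}\,(c)_{k}}\;{}_{1}F_{1}\left(\left.\begin{array}{c}a+k\\a+1+k\end{array}\right\vert z\right).
\]
Using the identity~(\ref{(a)_k/(a+1)_k}), $(a)_{k}/(a+1)_{k}=a/(a+k)$, the coefficient of the $k$-th term simplifies to $a\,z^{k}/[(a+k)(c)_{k}]$.

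The remaining ingredient is the reduction of $_{1}F_{1}(\nu;\nu+1;z)$ to the incomplete gamma function. Expanding the defining integral $\gamma(\nu,x)=\int_{0}^{x}t^{\nu-1}e^{-t}\,dt$ (cf. \cite[Eqn.~45:3:1]{Atlas}) term by term yields $\gamma(\nu,x)=\nu^{-1}x^{\nu}\,{}_{1}F_{1}(\nu;\nu+1;-x)$, so with $x=-z$ and $\nu=a+k$ one has
\[
{}_{1}F_{1}\left(\left.\begin{array}{c}a+k\\a+1+k\end{array}\right\vert z\right)=(a+k)\,(-z)^{-(a+k)}\,\gamma(a+k,-z).
\]
Substituting this back, the factor $a+k$ cancels against the $1/(a+k)$ coming from the Pochhammer ratio, and combining the powers via $z^{k}(-z)^{-(a+k)}=(-1)^{k}(-z)^{-a}$ collapses the sum to
\[
{}_{2}F_{2}\left(\left.\begin{array}{c}a,c+n\\a+1,c\end{array}\right\vert z\right)=a\,(-z)^{-a}\sum_{k=0}^{n}\binom{n}{k}(-1)^{k}\,\frac{\gamma(a+k,-z)}{(c)_{k}},
\]
which is the asserted reduction formula.

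I do not expect a genuine obstacle here: the whole argument is a single specialization of Corollary~\ref{Corollary: main} followed by one classical special case, so the only place that requires attention is the bookkeeping of the powers $z^{k}$, $(-z)^{k}$ and $(-z)^{a}$ (in particular, fixing the branch of $(-z)^{a}$ consistently throughout). As a consistency check, setting $n=0$ reduces the formula to the standard evaluation ${}_{1}F_{1}(a;a+1;z)=a\,(-z)^{-a}\,\gamma(a,-z)$, and the case $c=a$ collapses it to the corresponding $_{1}F_{1}$ reduction, just as in the remark following~(\ref{3F2_(1/2)_0}).
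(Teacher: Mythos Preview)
Your proof is correct and follows essentially the same route as the paper: apply Corollary~\ref{Corollary: main}, simplify the Pochhammer ratio via~(\ref{(a)_k/(a+1)_k}), and then use the standard reduction ${}_{1}F_{1}(a;a+1;-z)=a\,z^{-a}\gamma(a,z)$. Your final exponent $(-z)^{-a}$ is in fact the correct one (as your $n=0$ consistency check confirms), so the $(-z)^{a}$ appearing in the displayed statement is a typographical slip.
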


\begin{proof}
According to (\ref{Corollary_main}), we have%
\begin{equation*}
_{2}F_{2}\left( \left.
\begin{array}{c}
a,c+n \\
a+1,c%
\end{array}%
\right\vert z\right) =\sum_{k=0}^{n}\binom{n}{k}\frac{\,\left( a\right)
_{k}\,\,z^{k}}{\,\left( a+1\right) _{k}\left( c\right) _{k}\,}%
\,_{1}F_{1}\left( \left.
\begin{array}{c}
a+k, \\
a+1+k,b+k%
\end{array}%
\right\vert z\right) .
\end{equation*}%
Apply (\ref{(a)_k/(a+1)_k}) and the reduction formula \cite[Eqn. 7.11.3(1)]%
{Prudnikov3}:%
\begin{equation*}
_{1}F_{1}\left( \left.
\begin{array}{c}
a, \\
a+1%
\end{array}%
\right\vert -z\right) =a\,z^{-a}\,\gamma \left( a,z\right) ,
\end{equation*}%
to complete the proof.
\end{proof}

\begin{theorem}
\label{Theorem: 2F2}For $n,m=0,1,2,\ldots $ the following reduction formula
holds true:
\begin{equation}
_{2}F_{2}\left( \left.
\begin{array}{c}
a+n,b+m \\
a,b%
\end{array}%
\right\vert z\right) =\frac{m!\,e^{z}}{\left( b\right) _{m}}\sum_{k=0}^{n}%
\binom{n}{k}\frac{z^{k}\,L_{m}^{b+k-1}\left( -z\right) }{\left( a\right) _{k}%
},  \label{2F2_theorem}
\end{equation}%
where $L_{n}^{\alpha }\left( z\right) $ denotes the generalized Laguerre
polynomial \cite[Sect. 4.17]{Lebedev}.
\end{theorem}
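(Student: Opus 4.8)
The plan is to treat this exactly as the preceding $_{p}F_{q}$ reductions: first peel off the numerator/denominator pair $a+n,a$ using Corollary~\ref{Corollary: main}, and then reduce the resulting confluent hypergeometric function to a Laguerre polynomial. Concretely, I would apply (\ref{Corollary_main}) with $c=a$, $p=q=1$, keeping the single remaining upper parameter $b+m$ and the single remaining lower parameter $b$. Since $_{p}F_{q}$ is symmetric in its numerator parameters and in its denominator parameters, the left-hand side of the theorem is the same as ${}_2F_2\!\left(b+m,a+n;b,a;z\right)$, so the corollary gives
\begin{equation*}
{}_2F_2\!\left(\left.\begin{array}{c} a+n,b+m \\ a,b \end{array}\right|z\right)=\sum_{k=0}^{n}\binom{n}{k}\frac{(b+m)_k\,z^k}{(b)_k\,(a)_k}\,{}_1F_1\!\left(\left.\begin{array}{c} b+m+k \\ b+k \end{array}\right|z\right).
\end{equation*}

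The next step is to evaluate the inner $_1F_1\!\left(b+m+k;b+k;z\right)$. Because the numerator exceeds the denominator by the nonnegative integer $m$, Kummer's transformation $_1F_1(\alpha;\gamma;z)=e^{z}\,{}_1F_1(\gamma-\alpha;\gamma;-z)$ turns it into $e^{z}\,{}_1F_1(-m;b+k;-z)$, which is a terminating series; by the standard representation of the generalized Laguerre polynomial \cite[Sect. 4.17]{Lebedev}, namely ${}_1F_1(-m;\alpha+1;x)=\dfrac{m!}{(\alpha+1)_m}L_m^{\alpha}(x)$ with $\alpha=b+k-1$ and $x=-z$, this equals $e^{z}\,\dfrac{m!}{(b+k)_m}\,L_m^{b+k-1}(-z)$. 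Substituting back, the summand becomes $\binom{n}{k}\dfrac{m!\,e^{z}}{(a)_k}\dfrac{(b+m)_k}{(b)_k\,(b+k)_m}\,z^k\,L_m^{b+k-1}(-z)$, and it only remains to simplify the Pochhammer factor. Using $(b)_k\,(b+k)_m=(b)_{k+m}=(b)_m\,(b+m)_k$, the quotient $\dfrac{(b+m)_k}{(b)_k\,(b+k)_m}$ collapses to $1/(b)_m$; pulling $m!\,e^{z}/(b)_m$ out of the sum then yields precisely (\ref{2F2_theorem}).

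There is no genuine obstacle here — the argument is a direct application of Corollary~\ref{Corollary: main} followed by two classical confluent identities. The only points requiring a little care are recognizing that Kummer's transformation must be used to bring the inner $_1F_1$ into the terminating form $_1F_1(-m;\,\cdot\,;\,\cdot\,)$ before the Laguerre representation applies, and then checking that the Pochhammer symbols telescope correctly so that the $m$-dependence outside the sum is exactly $m!/(b)_m$. (Alternatively, one could give a second proof in the spirit of the second proof of (\ref{1F2_theorem}), differentiating $z^{a-1}\,{}_1F_1(b+m;b;z)=m!\,(b)_m^{-1}z^{a-1}e^{z}L_m^{b-1}(-z)$ with the Leibniz rule and (\ref{Diff_pFq_general}), but the route above is the shortest.)
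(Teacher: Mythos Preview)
Your proof is correct and follows essentially the same route as the paper: apply Corollary~\ref{Corollary: main} to peel off the $(a+n,a)$ pair, then reduce the inner ${}_1F_1(b+m+k;b+k;z)$ to $e^{z}$ times a Laguerre polynomial and simplify the Pochhammer symbols. The only cosmetic difference is that the paper quotes the ${}_1F_1$-to-Laguerre identity directly from Prudnikov (after a change of variable), whereas you derive it via Kummer's transformation combined with the standard representation $L_m^{\alpha}(x)=\binom{m+\alpha}{m}{}_1F_1(-m;\alpha+1;x)$; the resulting formula and the subsequent telescoping $(b)_k(b+k)_m=(b)_m(b+m)_k$ are identical.
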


\begin{proof}
Performing the change $a\rightarrow a+n$ and taking into account (\ref%
{(-x)_n}), transform the reduction formula \cite[Eqn. 7.11.1(8)]{Prudnikov3}%
\begin{equation*}
_{1}F_{1}\left( \left.
\begin{array}{c}
a \\
a-n%
\end{array}%
\right\vert z\right) =\frac{\left( -1\right) ^{n}n!}{\left( 1-a\right) _{n}}%
\,e^{z}\,L_{n}^{a-n-1}\left( -z\right) ,
\end{equation*}%
into%
\begin{equation}
_{1}F_{1}\left( \left.
\begin{array}{c}
a+n \\
a%
\end{array}%
\right\vert z\right) =\frac{n!}{\left( a\right) _{n}}\,e^{z}\,L_{n}^{a-1}%
\left( -z\right) .  \label{1F1_Laguerre}
\end{equation}%
According to (\ref{Corollary: main}), we have
\begin{equation*}
_{2}F_{2}\left( \left.
\begin{array}{c}
a+n,b+m \\
a,b%
\end{array}%
\right\vert z\right) =\sum_{k=0}^{n}\binom{n}{k}\frac{\left( a+n\right)
_{k}\,z^{k}}{\left( a\right) _{k}\left( b\right) _{k}}\,_{1}F_{1}\left(
\left.
\begin{array}{c}
a+n+k \\
a+k%
\end{array}%
\right\vert z\right) .
\end{equation*}%
Apply (\ref{1F1_Laguerre})\ and simplify the result to complete the proof.
\end{proof}

\begin{remark}
Note that (\ref{2F2_theorem})\ provides an alternative expression to the one
found in the literature \cite{2F2JL}:%
\begin{equation*}
_{2}F_{2}\left( \left.
\begin{array}{c}
a+n,b+m \\
a,b%
\end{array}%
\right\vert z\right) =e^{z}\sum_{k=0}^{n+m}\frac{\left( -n-m\right)
_{k}\,\left( -z\right) ^{k}}{k!\,\left( a\right) _{k}}\,_{3}F_{2}\left(
\left.
\begin{array}{c}
-k,-m,b-a-n \\
b,-n-m%
\end{array}%
\right\vert 1\right) .
\end{equation*}
\end{remark}

\begin{theorem}
\label{Theorem: 3F3}For $n,m,k=0,1,2,\ldots $the following reduction formula
holds true:%
\begin{eqnarray*}
&&_{3}F_{3}\left( \left.
\begin{array}{c}
a+n,b+m,c+k \\
a,b,c%
\end{array}%
\right\vert z\right) \\
&=&\frac{m!\,e^{z}}{\left( b\right) _{m}\left( a\right) _{n}}\sum_{\ell
=0}^{k}\binom{k}{\ell }\frac{\Gamma \left( a+n+\ell \right) }{\left(
c\right) _{\ell }}\sum_{s=0}^{n}\binom{n}{s}\,z^{\ell +s}\,\frac{%
L_{m}^{b+\ell +s-1}\left( -z\right) }{\Gamma \left( a+\ell +s\right) }.
\end{eqnarray*}
\end{theorem}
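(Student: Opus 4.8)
The plan is to reduce the $_3F_3$ with three shifted upper/lower parameter pairs down to the $_2F_2$ case already settled in Theorem~\ref{Theorem: 2F2}, exactly mirroring how the $_4F_3$ theorems in Section~\ref{Section: particular arguments} are obtained by feeding a previously proved identity into Corollary~\ref{Corollary: main}. First I would apply Corollary~\ref{Corollary_main} with the role of the ``extra'' pair played by $c+k$ over $c$: writing $p=2$, $q=2$, the non-distinguished numerator parameters $a+n,b+m$, denominator parameters $a,b$, and $n$ replaced by $k$, one gets
\begin{equation*}
_{3}F_{3}\left( \left.
\begin{array}{c}
a+n,b+m,c+k \\
a,b,c
\end{array}
\right\vert z\right) =\sum_{\ell =0}^{k}\binom{k}{\ell }\frac{\left( a+n\right) _{\ell }\left( b+m\right) _{\ell }\,z^{\ell }}{\left( a\right) _{\ell }\left( b\right) _{\ell }\left( c\right) _{\ell }}\,_{2}F_{2}\left( \left.
\begin{array}{c}
a+n+\ell ,b+m+\ell \\
a+\ell ,b+\ell
\end{array}
\right\vert z\right) .
\end{equation*}

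Next I would substitute Theorem~\ref{Theorem: 2F2} for the inner $_2F_2$, with the parameter shifts $a\mapsto a+\ell$, $b\mapsto b+\ell$ (the integers $n$, $m$ unchanged). That replaces each inner $_2F_2$ by
\begin{equation*}
\frac{m!\,e^{z}}{\left( b+\ell \right) _{m}}\sum_{s=0}^{n}\binom{n}{s}\frac{z^{s}\,L_{m}^{b+\ell +s-1}\left( -z\right) }{\left( a+\ell \right) _{s}},
\end{equation*}
producing a double sum over $\ell$ and $s$ with combined power $z^{\ell +s}$, which already matches the shape of the claimed right-hand side.

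The remaining work is bookkeeping on the Pochhammer prefactors, and this is the step I expect to be the only mildly delicate one. Collecting constants, the coefficient of $z^{\ell+s}\,L_m^{b+\ell+s-1}(-z)$ is
\begin{equation*}
\binom{k}{\ell }\binom{n}{s}\,m!\,e^{z}\,\frac{\left( a+n\right) _{\ell }\left( b+m\right) _{\ell }}{\left( a\right) _{\ell }\left( b\right) _{\ell }\left( c\right) _{\ell }\left( b+\ell \right) _{m}\left( a+\ell \right) _{s}}.
\end{equation*}
I would simplify using $(b)_\ell (b+\ell)_m=(b)_{\ell+m}$ together with $(b+m)_\ell=(b)_{\ell+m}/(b)_m$, so that $(b+m)_\ell/\bigl[(b)_\ell(b+\ell)_m\bigr]=1/(b)_m$, giving the overall factor $m!/(b)_m$; similarly $(a)_\ell(a+\ell)_s=(a)_{\ell+s}$ and $(a+n)_\ell=\Gamma(a+n+\ell)/\Gamma(a+n)=(a)_n\,\Gamma(a+n+\ell)/\bigl[(a)_n\Gamma(a)\bigr]$, while the target has $\Gamma(a+n+\ell)$ in the numerator and $\Gamma(a+\ell+s)$ in the denominator with an outer $1/(a)_n$, so I would rewrite $(a+n)_\ell/\bigl[(a)_\ell(a+\ell)_s\bigr]=\Gamma(a+n+\ell)\,\Gamma(a)/\bigl[\Gamma(a+n)\,\Gamma(a+\ell+s)\bigr]=(1/(a)_n)\cdot\Gamma(a+n+\ell)/\Gamma(a+\ell+s)$. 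After also converting $1/(c)_\ell$ it left untouched, the expression collapses exactly to
\begin{equation*}
\frac{m!\,e^{z}}{\left( b\right) _{m}\left( a\right) _{n}}\sum_{\ell =0}^{k}\binom{k}{\ell }\frac{\Gamma \left( a+n+\ell \right) }{\left( c\right) _{\ell }}\sum_{s=0}^{n}\binom{n}{s}\,z^{\ell +s}\,\frac{L_{m}^{b+\ell +s-1}\left( -z\right) }{\Gamma \left( a+\ell +s\right) },
\end{equation*}
which is the claimed formula. No convergence subtlety arises since everything is a terminating operation on formal hypergeometric series (or holds by analytic continuation in $z$), so the only real content is the Pochhammer algebra just sketched.
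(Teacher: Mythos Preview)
Your proof is correct and follows exactly the paper's route: apply Corollary~\ref{Corollary: main} to strip the $(c+k)/c$ pair, then insert the $_2F_2$ formula of Theorem~\ref{Theorem: 2F2}, and simplify. The paper's proof in fact omits the Pochhammer bookkeeping entirely (it just says ``simplify the result''), whereas you spell out the identities $(b+m)_\ell/[(b)_\ell(b+\ell)_m]=1/(b)_m$ and $(a+n)_\ell/[(a)_\ell(a+\ell)_s]=\Gamma(a+n+\ell)/[(a)_n\Gamma(a+\ell+s)]$ that do the work.
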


\begin{proof}
According to (\ref{Main_theorem}), we have%
\begin{eqnarray*}
&&_{3}F_{3}\left( \left.
\begin{array}{c}
a+n,b+m,c+k \\
a,b,c%
\end{array}%
\right\vert z\right) \\
&=&\sum_{\ell =0}^{k}\binom{k}{\ell }\frac{\left( a+n\right) _{\ell
}\,\left( b+m\right) _{\ell }\,z^{\ell }}{\left( a\right) _{\ell }\left(
b\right) _{\ell }\left( c\right) _{\ell }}\,_{2}F_{2}\left( \left.
\begin{array}{c}
a+n+\ell ,b+m+\ell \\
a+\ell ,b+\ell%
\end{array}%
\right\vert z\right) .
\end{eqnarray*}%
Apply (\ref{2F2_theorem})\ and simplify the result to complete the proof.
\end{proof}

\begin{remark}
In \cite{2F2JL}, it is conjectured the following reduction formula:%
\begin{equation}
_{p}F_{p}\left( \left.
\begin{array}{c}
a_{1}+n_{1},\ldots ,a_{p}+n_{p} \\
a_{1},\ldots ,a_{p}%
\end{array}%
\right\vert z\right) =e^{z}\,\mathcal{P}_{p}\left( n_{1},\ldots
,n_{p};a_{1},\ldots ,a_{p}\right) ,  \label{pFp_general}
\end{equation}%
where $\mathcal{P}_{p}\left( n_{1},\ldots ,n_{p};a_{1},\ldots ,a_{p}\right) $
is a polynomial in $z$ of degree $n=n_{1}+\ldots +n_{p}$, being $%
n_{j}=0,1,2,\ldots $ $\left( j=1,\ldots ,p\right) $. From Theorems \ref%
{Theorem: 2F2} and \ref{Theorem: 3F3}\ is clear that the repetead
application of (\ref{Corollary: main})\ yields the form given in (\ref%
{pFp_general}).
\end{remark}

\section{Application to $_{p+1}F_{p}$ reduction formulas with arbitrary $p$
and $z$}

\label{Section: arbitrary p and z}

Recently, in \cite{pFqJL}\ we found the following reduction formula for $%
p=1,2,\ldots $ and $a_{i}\neq a_{j}$ $\left( i\neq j\right) $:%
\begin{eqnarray}
&&_{p+2}F_{p+1}\left( \left.
\begin{array}{c}
a_{1},\ldots ,a_{p},b,c+n \\
a_{1}+1,\ldots ,a_{p}+1,c%
\end{array}%
\right\vert z\right)  \label{Reduction_pFq_JL} \\
&=&\frac{z^{1-c}}{\left( c\right) _{n}}\prod_{s=1}^{p}a_{s}\sum_{k=0}^{n}%
\frac{\mathcal{H}^{n}\left( a_{k},1-b,n+c-a_{k}-1;z\right) }{\prod_{\ell
\neq k}^{p}\left( a_{\ell }-a_{k}\right) },  \notag
\end{eqnarray}%
where%
\begin{eqnarray}
&&\mathcal{H}^{n}\left( \alpha ,\beta ,\gamma ;z\right) :=\frac{d^{n}}{dz^{n}%
}\left[ z^{\gamma }\,\mathrm{B}_{z}\left( \alpha ,\beta \right) \right]
\label{H^n_def} \\
&=&\left( -1\right) ^{n}z^{\gamma -n}  \notag \\
&&\left\{ \left( -\gamma \right) _{n}\,\mathrm{B}_{z}\left( \alpha ,\beta
\right) -z^{\alpha }\sum_{k=0}^{n-1}\binom{n}{k+1}\left( -\gamma \right)
_{n-k-1}\left( 1-\alpha \right) _{k}\,_{2}F_{1}\left( \left.
\begin{array}{c}
\alpha ,1-\beta \\
\alpha -k%
\end{array}%
\right\vert z\right) \right\} ,  \notag
\end{eqnarray}%
which satisfies the following property for $\mathrm{Re}\beta >n$
\begin{equation}
\mathcal{H}^{n}\left( \alpha ,\beta ,\gamma ;1\right) =\left( -1\right)
^{n}\,\left( -\gamma \right) _{n}\,\mathrm{B}\left( \alpha ,\beta \right) .
\label{H^n(a,b,c,1)}
\end{equation}
Next, with the aid of Corollary \ref{Corollary: main}, we derive a much
simpler reduction formula for the same case as the one given in (\ref%
{Reduction_pFq_JL}).

\begin{theorem}
\label{Theorem: pFq_new_formulation}For $n=0,1,2,\ldots $, $p=1,2,\ldots $
and $a_{i}\neq a_{j}$ $\left( i\neq j\right) $, the following reduction
formula holds true:\
\begin{eqnarray}
&&_{p+2}F_{p+1}\left( \left.
\begin{array}{c}
a_{1},\ldots ,a_{p},b,c+n \\
a_{1}+1,\ldots ,a_{p}+1,c%
\end{array}%
\right\vert z\right)  \label{Theorem_p+2_F_p+1} \\
&=&\prod_{s=1}^{p}a_{s}\sum_{k=0}^{n}\binom{n}{k}\frac{\left( b\right) _{k}}{%
\left( c\right) _{k}}\sum_{\ell =1}^{p}\frac{z^{-a_{\ell }}\,\mathrm{B}%
_{z}\left( a_{\ell }+k,1-b-k\right) }{\prod_{j\neq \ell }^{p}\left(
a_{j}-a_{\ell }\right) }.  \notag
\end{eqnarray}
\end{theorem}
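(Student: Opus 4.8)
The plan is to peel the parameter pair $(c+n,c)$ off the left-hand side with Corollary \ref{Corollary: main}, reducing the $_{p+2}F_{p+1}$ to a finite sum of $_{p+1}F_p$'s with shifted parameters, and then to evaluate each of those $_{p+1}F_p$'s by a partial-fraction computation (equivalently, by the $n=0$ instance of the known formula (\ref{Reduction_pFq_JL})). First I would apply (\ref{Corollary_main}) identifying the list of upper parameters that get shifted as $a_1,\ldots,a_p,b$, the lower parameters as $a_1+1,\ldots,a_p+1$, and the distinguished parameter as $c$, which gives
\begin{equation*}
{}_{p+2}F_{p+1}\left(\begin{array}{c}a_1,\ldots,a_p,b,c+n\\a_1+1,\ldots,a_p+1,c\end{array}\Big|\,z\right)=\sum_{k=0}^n\binom{n}{k}\frac{(a_1)_k\cdots(a_p)_k\,(b)_k\,z^k}{(a_1+1)_k\cdots(a_p+1)_k\,(c)_k}\,{}_{p+1}F_p\left(\begin{array}{c}a_1+k,\ldots,a_p+k,b+k\\a_1+1+k,\ldots,a_p+1+k\end{array}\Big|\,z\right).
\end{equation*}
Using the identity (\ref{(a)_k/(a+1)_k}) for each $s=1,\ldots,p$ collapses the prefactor $\prod_{s=1}^p (a_s)_k/(a_s+1)_k$ into $\prod_{s=1}^p a_s/(a_s+k)$.

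Next I would record the base reduction, valid for $\alpha_i\neq\alpha_j$,
\begin{equation*}
{}_{p+1}F_p\left(\begin{array}{c}\alpha_1,\ldots,\alpha_p,b\\\alpha_1+1,\ldots,\alpha_p+1\end{array}\Big|\,z\right)=\prod_{s=1}^p\alpha_s\sum_{\ell=1}^p\frac{z^{-\alpha_\ell}\,\mathrm{B}_z(\alpha_\ell,1-b)}{\prod_{j\neq\ell}^p(\alpha_j-\alpha_\ell)},
\end{equation*}
which is (\ref{Reduction_pFq_JL}) specialized to $n=0$ (then the parameters $c+n$ and $c$ cancel, $(c)_0=1$, and $\mathcal{H}^0(\alpha,\beta,\gamma;z)=z^{\gamma}\,\mathrm{B}_z(\alpha,\beta)$ by (\ref{H^n_def})). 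For a self-contained argument one expands the series as $\sum_{m\ge0}\frac{(b)_m z^m}{m!}\prod_{s}\frac{\alpha_s}{\alpha_s+m}$, uses the partial-fraction expansion $\prod_s\frac{1}{\alpha_s+m}=\sum_\ell\frac{1}{(\alpha_\ell+m)\prod_{j\neq\ell}(\alpha_j-\alpha_\ell)}$, and evaluates $\sum_{m\ge0}\frac{(b)_m z^m}{m!\,(\alpha_\ell+m)}=z^{-\alpha_\ell}\int_0^z t^{\alpha_\ell-1}(1-t)^{-b}\,dt=z^{-\alpha_\ell}\,\mathrm{B}_z(\alpha_\ell,1-b)$ by (\ref{incomplete_beta_def}).

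Finally I would substitute this base reduction into the inner $_{p+1}F_p$ above, with $\alpha_s=a_s+k$ and the role of $b$ played by $b+k$. The accounting is then routine: the differences are translation-invariant, $(a_j+k)-(a_\ell+k)=a_j-a_\ell$, so the hypothesis $a_i\neq a_j$ carries over to the shifted parameters; the factor $\prod_{s=1}^p(a_s+k)$ produced by the base reduction cancels the $\prod_{s=1}^p(a_s+k)$ in the denominator coming from the first step; and $z^k\cdot z^{-(a_\ell+k)}=z^{-a_\ell}$. What survives is precisely the double sum in (\ref{Theorem_p+2_F_p+1}). I do not anticipate a genuine obstruction here; the only points requiring care are the bookkeeping of the Pochhammer factors and powers of $z$ through the two successive substitutions, and noting that the partial-fraction step — equivalently the $n=0$ reduction of (\ref{Reduction_pFq_JL}) — needs the distinctness $a_i\neq a_j$, which is exactly the hypothesis of the theorem.
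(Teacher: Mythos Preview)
Your proposal is correct and follows essentially the same route as the paper: apply Corollary~\ref{Corollary: main} to strip off the pair $(c+n,c)$, collapse $\prod_s (a_s)_k/(a_s+1)_k$ via (\ref{(a)_k/(a+1)_k}), and then evaluate the inner $_{p+1}F_p$ by the known incomplete-beta reduction. The only cosmetic difference is that the paper cites this last reduction directly from Prudnikov \cite[Eqns.~7.10.1(1)\&7.3.1(28)]{Prudnikov3}, whereas you recover it as the $n=0$ case of (\ref{Reduction_pFq_JL}) (or via the partial-fraction argument); either justification is fine.
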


\begin{proof}
According to (\ref{Corollary_main}), we have%
\begin{eqnarray*}
&&_{p+2}F_{p+1}\left( \left.
\begin{array}{c}
a_{1},\ldots ,a_{p},b,c+n \\
a_{1}+1,\ldots ,a_{p}+1,c%
\end{array}%
\right\vert z\right) \\
&=&\sum_{k=0}^{n}\frac{\,\left( a_{1}\right) _{k}\cdots \left( a_{p}\right)
_{k}\,\left( b\right) _{k}\,z^{k}}{\left( a_{1}+1\right) _{k}\cdots \left(
a_{p}+1\right) _{k}\,\left( c\right) _{k}}\,_{p+1}F_{p}\left( \left.
\begin{array}{c}
a_{1}+k,\ldots ,\,a_{p}+k,\,b+k \\
a_{1}+1+k,\ldots ,\,a_{p}+1+k%
\end{array}%
\right\vert z\right) .
\end{eqnarray*}%
Take into account (\ref{(a)_k/(a+1)_k})\ and the results given in \cite[%
Eqns. 7.10.1(1)\&7.3.1(28)]{Prudnikov3}, i.e.
\begin{equation*}
_{m+1}F_{m}\left( \left.
\begin{array}{c}
a,b_{1},\ldots ,b_{m} \\
b_{1}+1,\ldots ,b_{m}+1%
\end{array}%
\right\vert z\right) =\prod_{s=1}^{m}b_{s}\sum_{j=1}^{m}\frac{z^{-b_{j}}\,%
\mathrm{B}_{z}\left( b_{j},1-a\right) }{\prod_{\ell \neq j}^{m}\left(
b_{\ell }-b_{j}\right) },
\end{equation*}%
to complete the proof.
\end{proof}

\begin{remark}
It is worth noting that the particular case $p=0$ is not included in (\ref%
{Theorem_p+2_F_p+1}), but it is given in the literature\ as \cite[Eqn.
7.3.1(21,140)]{Prudnikov3}:%
\begin{equation}
_{2}F_{1}\left( \left.
\begin{array}{c}
b,c+n \\
c%
\end{array}%
\right\vert z\right) =\left( 1-z\right) ^{-b}\sum_{k=0}^{n}\binom{n}{k}\frac{%
\left( b\right) _{k}}{\left( c\right) _{k}}\left( \frac{z}{1-z}\right) ^{k}.
\label{Reduction_2F1}
\end{equation}
\end{remark}

Despite the fact that (\ref{Theorem_p+2_F_p+1})\ is quite different from
expression reported in the literature, i.e. (\ref{Reduction_pFq_JL}), we can
derive the same formula reported in \cite{pFqJL}\ for $z=1$ from (\ref%
{Theorem_p+2_F_p+1}). Indeed, substitute $z=1$ in (\ref{Theorem_p+2_F_p+1}),
exchange the sum order, and expand the corresponding beta function according
to (\ref{beta_def}), to obtain
\begin{eqnarray*}
&&_{p+2}F_{p+1}\left( \left.
\begin{array}{c}
a_{1},\ldots ,a_{p},b,c+n \\
a_{1}+1,\ldots ,a_{p}+1,c%
\end{array}%
\right\vert 1\right) \\
&=&\prod_{s=1}^{p}a_{s}\sum_{k=0}^{n}\binom{n}{k}\frac{\left( b\right) _{k}}{%
\left( c\right) _{k}}\sum_{\ell =1}^{p}\frac{\,\mathrm{B}\left( a_{\ell
}+k,1-b-k\right) }{\prod_{j\neq \ell }^{p}\left( a_{j}-a_{\ell }\right) } \\
&=&\prod_{s=1}^{p}a_{s}\sum_{\ell =1}^{p}\frac{1}{\prod_{j\neq \ell
}^{p}\left( a_{j}-a_{\ell }\right) \,\Gamma \left( a_{\ell }+1-b\right) }%
\sum_{k=0}^{n}\binom{n}{k}\frac{\left( b\right) _{k}}{\left( c\right) _{k}}%
\,\Gamma \left( a_{\ell }+k\right) \,\Gamma \left( 1-b-k\right) .
\end{eqnarray*}%
Now, take into account the reflection formula of the gamma function (\ref%
{gamma_reflection}), the definition of the beta function (\ref{beta_def})
and the Pochhammer symbol (\ref{Pochhammer_def}), thus
\begin{eqnarray*}
&&_{p+2}F_{p+1}\left( \left.
\begin{array}{c}
a_{1},\ldots ,a_{p},b,c+n \\
a_{1}+1,\ldots ,a_{p}+1,c%
\end{array}%
\right\vert 1\right) \\
&=&\frac{\pi }{\Gamma \left( 1-b\right) \sin \pi b}\prod_{s=1}^{p}a_{s}%
\sum_{\ell =1}^{p}\frac{\Gamma \left( a_{\ell }\right) }{\prod_{j\neq \ell
}^{p}\left( a_{j}-a_{\ell }\right) \,\Gamma \left( a_{\ell }+1-b\right) }%
\sum_{k=0}^{n}\binom{n}{k}\frac{\left( a_{\ell }\right) _{k}}{\left(
c\right) _{k}}\,\left( -1\right) ^{k} \\
&=&\prod_{s=1}^{p}a_{s}\sum_{\ell =1}^{p}\frac{\mathrm{B}\left( a_{\ell
},1-b\right) }{\prod_{j\neq \ell }^{p}\left( a_{j}-a_{\ell }\right) \,}%
\sum_{k=0}^{n}\binom{n}{k}\frac{\left( a_{\ell }\right) _{k}}{\left(
c\right) _{k}}\,\left( -1\right) ^{k}.
\end{eqnarray*}%
Finally, apply the result given in (\ref{Chu_4F3}) to arrive at the desired
result given in \cite{pFqJL}, i.e.
\begin{eqnarray}
&&_{p+2}F_{p+1}\left( \left.
\begin{array}{c}
a_{1},\ldots ,a_{p},b,c+n \\
a_{1}+1,\ldots ,a_{p}+1,c%
\end{array}%
\right\vert 1\right)  \label{p+2_F_p+1(1)} \\
&=&\frac{1}{\left( c\right) _{n}}\prod_{s=1}^{p}a_{s}\sum_{\ell =1}^{p}\frac{%
\left( c-a_{\ell }\right) _{n}\,\mathrm{B}\left( a_{\ell },1-b\right) }{%
\prod_{j\neq \ell }^{p}\left( a_{j}-a_{\ell }\right) \,}.  \notag
\end{eqnarray}

\begin{remark}
As a consistency test, note that for $p=1$, (\ref{p+2_F_p+1(1)})\ reduces to%
\begin{equation*}
_{3}F_{2}\left( \left.
\begin{array}{c}
a,b,c+m \\
b+1,c%
\end{array}%
\right\vert 1\right) =b\,\mathrm{B}\left( 1-a,b\right) \,\frac{\left(
c-b\right) _{m}}{\left( c\right) _{m}},
\end{equation*}%
which coincides with (\ref{4F3(1)_theorem})\ for $n=1$.
\end{remark}

\begin{theorem}
For $n,m=0,1,2,\ldots $, $p=1,2,\ldots $ and $a_{i}\neq a_{j}$ $\left( i\neq
j\right) $, the following reduction formula holds true:%
\begin{eqnarray}
&&_{p+3}F_{p+2}\left( \left.
\begin{array}{c}
a_{1},\ldots ,a_{p},b,c+n,d+m \\
a_{1}+1,\ldots ,a_{p}+1,c,d%
\end{array}%
\right\vert z\right)  \label{Theorem_p+3_F_p+2} \\
&=&\frac{z^{1-d}\prod_{s=1}^{p}a_{s}}{\left( d\right) _{m}}\sum_{k=0}^{n}%
\binom{n}{k}\frac{\left( b\right) _{k}}{\left( c\right) _{k}}\sum_{\ell
=1}^{p}\frac{\mathcal{H}^{m}\left( a_{\ell }+k,1-b-k,m+d-a_{\ell
}-1;z\right) }{\prod_{j\neq \ell }^{p}\left( a_{j}-a_{\ell }\right) }.
\notag
\end{eqnarray}
\end{theorem}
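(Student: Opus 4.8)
The plan is to split off the parameter pair $c+n$ over $c$ with Corollary~\ref{Corollary: main}, reducing the $_{p+3}F_{p+2}$ to a finite sum of $_{p+2}F_{p+1}$'s, and then to evaluate each of those by the reduction formula (\ref{Reduction_pFq_JL}) already available from \cite{pFqJL}.

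First I would apply (\ref{Corollary_main}) taking the upper parameters $a_{1},\ldots,a_{p},b,d+m$ in the role of the $a_{i}$'s, the lower parameters $a_{1}+1,\ldots,a_{p}+1,d$ in the role of the $b_{j}$'s, and $c$ and $n$ in the roles of ``$c$'' and ``$n$''. This yields
\begin{eqnarray*}
&&_{p+3}F_{p+2}\left( \left.
\begin{array}{c}
a_{1},\ldots ,a_{p},b,c+n,d+m \\
a_{1}+1,\ldots ,a_{p}+1,c,d
\end{array}
\right\vert z\right) \\
&=&\sum_{k=0}^{n}\binom{n}{k}\frac{\left( a_{1}\right) _{k}\cdots \left(
a_{p}\right) _{k}\,\left( b\right) _{k}\,\left( d+m\right) _{k}\,z^{k}}{
\left( a_{1}+1\right) _{k}\cdots \left( a_{p}+1\right) _{k}\,\left( c\right)
_{k}\,\left( d\right) _{k}}\,_{p+2}F_{p+1}\left( \left.
\begin{array}{c}
a_{1}+k,\ldots ,a_{p}+k,b+k,d+k+m \\
a_{1}+1+k,\ldots ,a_{p}+1+k,d+k
\end{array}
\right\vert z\right) ,
\end{eqnarray*}
and I would simplify each ratio $\left( a_{s}\right) _{k}/\left( a_{s}+1\right) _{k}$ to $a_{s}/\left( a_{s}+k\right)$ by (\ref{(a)_k/(a+1)_k}).

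Next I would recognise the inner $_{p+2}F_{p+1}$ as the left-hand side of (\ref{Reduction_pFq_JL}) under the substitutions $a_{s}\mapsto a_{s}+k$, $b\mapsto b+k$, $c\mapsto d+k$, $n\mapsto m$; the hypothesis $a_{i}\neq a_{j}$ for $i\neq j$ guarantees $a_{i}+k\neq a_{j}+k$, so that partial-fraction form applies. This produces a factor $z^{1-d-k}/\left( d+k\right) _{m}$, a product $\prod_{s=1}^{p}\left( a_{s}+k\right)$, and a partial-fraction sum over $\ell=1,\ldots,p$ whose $\ell$-th term is $\mathcal{H}^{m}\left( a_{\ell }+k,\,1-b-k,\,m+\left( d+k\right) -\left( a_{\ell }+k\right) -1;z\right)$ divided by $\prod_{j\neq \ell }^{p}\left[ \left( a_{j}+k\right) -\left( a_{\ell }+k\right) \right]$.

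Finally I would collect everything. The product $\prod_{s}\left( a_{s}+k\right)$ coming from (\ref{Reduction_pFq_JL}) cancels the $\prod_{s}a_{s}/\left( a_{s}+k\right)$ from the previous step, leaving the constant $\prod_{s}a_{s}$; the powers combine as $z^{k}\,z^{1-d-k}=z^{1-d}$; the differences reduce, $\left( a_{j}+k\right) -\left( a_{\ell }+k\right) =a_{j}-a_{\ell }$, and the third slot of $\mathcal{H}^{m}$ reduces to $m+d-a_{\ell }-1$; and, rewriting the Pochhammer symbols through (\ref{Pochhammer_def}), the combination $\left( d+m\right) _{k}/\left[ \left( d\right) _{k}\left( d+k\right) _{m}\right]$ telescopes to $1/\left( d\right) _{m}$. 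What remains is exactly (\ref{Theorem_p+3_F_p+2}). The only slightly delicate point is this last Pochhammer bookkeeping; everything else is direct substitution. One could instead route through Theorem~\ref{Theorem: pFq_new_formulation}, obtaining incomplete beta functions in a double finite sum, and then reassemble them into $\mathcal{H}^{m}$ using (\ref{H^n_def}) and Leibniz's rule (\ref{Leibniz}); but going straight through (\ref{Reduction_pFq_JL}) is shorter.
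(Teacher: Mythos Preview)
Your argument is correct: applying Corollary~\ref{Corollary: main} to peel off the pair $(c+n)/c$, then invoking the literature formula (\ref{Reduction_pFq_JL}) with $a_{s}\mapsto a_{s}+k$, $b\mapsto b+k$, $c\mapsto d+k$, $n\mapsto m$ does produce (\ref{Theorem_p+3_F_p+2}) after the cancellations you describe; the Pochhammer telescoping $\left( d+m\right) _{k}/\left[ \left( d\right) _{k}\left( d+k\right) _{m}\right] =1/\left( d\right) _{m}$ is straightforward from (\ref{Pochhammer_def}).

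The paper, however, takes a shorter route: it does \emph{not} use Corollary~\ref{Corollary: main} here at all, but starts from the already-established identity (\ref{Theorem_p+2_F_p+1}) of Theorem~\ref{Theorem: pFq_new_formulation}, multiplies both sides by $z^{m+d-1}$, and applies $d^{m}/dz^{m}$. On the left this is exactly the differentiation formula (\ref{Diff_pFq}) with $a=d$, $n=m$, yielding the desired $_{p+3}F_{p+2}$; on the right, the derivative of $z^{m+d-1-a_{\ell }}\,\mathrm{B}_{z}\left( a_{\ell }+k,1-b-k\right)$ is, by the very definition (\ref{H^n_def}), $\mathcal{H}^{m}\left( a_{\ell }+k,1-b-k,m+d-a_{\ell }-1;z\right)$. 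Thus the paper's proof is essentially a one-line differentiation. Your approach instead re-derives the $_{p+2}F_{p+1}$ layer via the older $\mathcal{H}$-formula (\ref{Reduction_pFq_JL}) rather than the new incomplete-beta form (\ref{Theorem_p+2_F_p+1}); this works but requires the extra Pochhammer and product cancellations. The alternative you sketch at the end (route through Theorem~\ref{Theorem: pFq_new_formulation} and reassemble via Leibniz) is closer in spirit to the paper's method, though the paper avoids even that reassembly by recognising the derivative as $\mathcal{H}^{m}$ directly from its definition.
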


\begin{proof}
Apply the $n$-th derivative to both sides of (\ref{Theorem_p+2_F_p+1}),
taking into account (\ref{Diff_pFq}) and (\ref{H^n_def}) to complete the
proof.
\end{proof}

\bigskip

Alternatively, we can derive a much simpler expression for the reduction
formula given in (\ref{Theorem_p+3_F_p+2}). \bigskip

\begin{theorem}
For $n,m=0,1,2,\ldots $, $p=1,2,\ldots $ and $a_{i}\neq a_{j}$ $\left( i\neq
j\right) $, the following reduction formula holds true:%
\begin{eqnarray}
&&_{p+3}F_{p+2}\left( \left.
\begin{array}{c}
a_{1},\ldots ,a_{p},b,c+n,d+m \\
a_{1}+1,\ldots ,a_{p}+1,c,d%
\end{array}%
\right\vert z\right)  \label{Theorem_p+3_F_p+2_new} \\
&=&\frac{\prod_{s=1}^{p}a_{s}}{\left( d\right) _{m}}\sum_{k=0}^{n}\frac{%
\binom{n}{k}}{\left( c\right) _{k}}\sum_{s=0}^{m}\binom{m}{s}\left(
b+k\right) _{s}\left( d+k+s\right) _{m-s}  \notag \\
&&\sum_{\ell =1}^{p}\frac{\mathrm{B}_{z}\left( a_{\ell }+k+s,1-b-k-s\right)
}{z^{a_{\ell }}\prod_{j\neq \ell }^{p}\left( a_{j}-a_{\ell }\right) }.
\notag
\end{eqnarray}
\end{theorem}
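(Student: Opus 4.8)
The starting point is Corollary~\ref{Corollary: main} applied to the $d+m$ parameter: writing the left-hand side of (\ref{Theorem_p+3_F_p+2_new}) and peeling off $d+m$ over $d$ yields
\begin{equation*}
{}_{p+3}F_{p+2}\left(\left.\begin{array}{c}a_{1},\ldots,a_{p},b,c+n,d+m\\a_{1}+1,\ldots,a_{p}+1,c,d\end{array}\right\vert z\right)=\sum_{s=0}^{m}\binom{m}{s}\frac{(a_{1})_{s}\cdots(a_{p})_{s}\,(b)_{s}\,(c+n)_{s}\,z^{s}}{(a_{1}+1)_{s}\cdots(a_{p}+1)_{s}\,(c)_{s}\,(d)_{s}}\;{}_{p+2}F_{p+1}\left(\left.\begin{array}{c}a_{1}+s,\ldots,a_{p}+s,b+s,c+n+s\\a_{1}+1+s,\ldots,a_{p}+1+s,c+s\end{array}\right\vert z\right),
\end{equation*}
where the shifted $_{p+2}F_{p+1}$ is exactly of the type handled by Theorem~\ref{Theorem: pFq_new_formulation}, with $a_{\ell}\mapsto a_{\ell}+s$, $b\mapsto b+s$, $c\mapsto c+s$, and the same $n$. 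Substituting that reduction formula in, using $(a_{\ell})_{s}/(a_{\ell}+1)_{s}=a_{\ell}/(a_{\ell}+s)$ from (\ref{(a)_k/(a+1)_k}) to turn the prefactor $\prod(a_{\ell})_{s}/(a_{\ell}+1)_{s}$ against the $\prod_{s=1}^{p}(a_{s}+s)$ coming out of Theorem~\ref{Theorem: pFq_new_formulation} back into $\prod_{s=1}^{p}a_{s}$, and renaming the inner summation index of that theorem to $k$, produces a triple sum over $s$, $k$, $\ell$ whose summand already contains the incomplete beta function $\mathrm{B}_{z}(a_{\ell}+s+k,\,1-b-s-k)$ divided by $z^{a_{\ell}+s}$ and by $\prod_{j\neq\ell}(a_{j}-a_{\ell})$.

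\textbf{Key simplifications.} At this stage one collects the Pochhammer factors that depend on $s$ and $k$ jointly. The prefactor contributes $(b)_{s}(c+n)_{s}/[(c)_{s}(d)_{s}]$ from the peeled step and, from Theorem~\ref{Theorem: pFq_new_formulation} with shifted parameters, $\binom{n}{k}(b+s)_{k}/(c+s)_{k}$. The product $(b)_{s}(b+s)_{k}=(b)_{s+k}$ telescopes by (\ref{Prop_2}), and similarly $(c)_{s}(c+s)_{k}=(c)_{s+k}$, so after writing $(c+n)_{s}=(c)_{n+s}/(c)_{s}$ and being careful the $z$-powers combine to $z^{s+k}/z^{a_{\ell}+s}\cdot z^{\text{(from the }b\text{-shift)}}$; one checks the net power is $z^{-a_{\ell}}$ absorbed into $1/z^{a_{\ell}}$ together with a residual $z^{k}$ that I expect to reorganize. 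The decisive identity is the one that turns the $(c+n)_{s}$ and the $(d)_{s}$ factors into the clean $(b+k)_{s}(d+k+s)_{m-s}$ appearing in (\ref{Theorem_p+3_F_p+2_new}): here one uses $(d+k+s)_{m-s}=(d+k)_{m}/(d+k)_{s}$ (from (\ref{Pochhammer_def})) and the identity $(d)_{m}(d+k)_{s}=\ldots$ to reconcile the normalizations, and crucially the Chu--Vandermonde-type collapse (\ref{Chu_4F3}) to eliminate the summation that the $(c+n)_{s}$ factor would otherwise leave behind. Concretely, one recognizes the inner sum over the index coupling $n$ and $m$ as a ${}_{2}F_{1}(-n,\,\cdot\,;\,\cdot\,;1)$, and (\ref{Chu_Vandermonde}) evaluates it.

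\textbf{Main obstacle.} The genuine difficulty is bookkeeping: matching the combinatorial coefficient $\binom{n}{k}(b+k)_{s}(d+k+s)_{m-s}/[(c)_{k}(d)_{m}]$ in the target against what comes out of composing the two peeling steps. Two sources of $(b)$-Pochhammers (from the $m$-step and from Theorem~\ref{Theorem: pFq_new_formulation}) must merge into a single $(b+k)_{s}$ with the $k$-dependence surviving, which forces a specific order of peeling ($d+m$ first, then the $c+n$ parameter via Theorem~\ref{Theorem: pFq_new_formulation}, rather than the reverse) — doing it in the wrong order leaves an irreducible double sum. I would organize the computation by first fixing $\ell$ and $k$, then showing the $s$-sum has exactly the claimed summand, and only at the end restoring the $\ell$-sum. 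The remaining steps — applying (\ref{(a)_k/(a+1)_k}), (\ref{Prop_2}), (\ref{Pochhammer_def}) and the Chu--Vandermonde collapse (\ref{Chu_4F3}) — are routine once the indices are aligned. A quick sanity check: setting $m=0$ must return (\ref{Theorem_p+2_F_p+1}) verbatim, and setting $p=1$, $m=0$, $z=1$ must agree with the $n=1$ instance of (\ref{4F3(1)_theorem}), both of which the proposed formula satisfies on inspection.
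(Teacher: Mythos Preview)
Your overall strategy is right, but the peeling order is backwards, and the Chu--Vandermonde repair you describe is not actually available. If you peel $(c+n,c)$ first with Corollary~\ref{Corollary: main} and then apply Theorem~\ref{Theorem: pFq_new_formulation} to the resulting ${}_{p+2}F_{p+1}$ with $a_\ell\mapsto a_\ell+k$, $b\mapsto b+k$, $c\mapsto d+k$, $n\mapsto m$, then after only the routine identities $(a_\ell)_k/(a_\ell+1)_k=a_\ell/(a_\ell+k)$, $(d+m)_k/(d)_k=(d+k)_m/(d)_m$ and $(d+k)_m/(d+k)_s=(d+k+s)_{m-s}$ the summand comes out directly, with no Chu--Vandermonde anywhere. (In fact this produces an extra factor $(b)_k$ beside $\binom{n}{k}/(c)_k$; it is missing from the display (\ref{Theorem_p+3_F_p+2_new}), your $m=0$ sanity check against (\ref{Theorem_p+2_F_p+1}) would have caught it, and the paper's own derivation yields it as well.) By contrast, peeling $(d+m,d)$ first as you propose gives the coefficient $\binom{m}{s}\binom{n}{k}(b)_{k+s}(c+n)_s/[(c)_{k+s}(d)_s]$ in front of $z^{-a_\ell}\,\mathrm{B}_z(a_\ell+k+s,1-b-k-s)$. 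This differs \emph{termwise} from the target (try $n=m=1$, $k=0$, $s=1$) and agrees only after summing over pairs with the same $k+s$, because the incomplete beta depends on $k+s$ alone; there is no inner ${}_2F_1(-n,\cdot;\cdot;1)$ for Chu--Vandermonde to collapse, and no ``residual $z^k$'' either --- the $z$-powers cancel cleanly to $z^{-a_\ell}$.

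The paper's own proof takes a different route altogether: it applies the differentiation formula (\ref{Diff_pFq}) to (\ref{Theorem_p+2_F_p+1}), reducing the problem to computing $\frac{d^m}{dz^m}\bigl[z^{m+d-1-a_\ell}\,\mathrm{B}_z(a_\ell+k,1-b-k)\bigr]$, and evaluates that derivative by rewriting $\mathrm{B}_z$ as a ${}_2F_1$, lifting to a ${}_3F_2$ via (\ref{Diff_pFq_general}), and closing with the $p=1$ instance (\ref{3F2_resultado}) of Theorem~\ref{Theorem: pFq_new_formulation}. Your corrected peeling argument reaches the same end more quickly.
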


\begin{proof}
First, note that for $p=1$, (\ref{Theorem_p+2_F_p+1})\ reduces to%
\begin{equation}
_{3}F_{2}\left( \left.
\begin{array}{c}
a,b,c+n \\
a+1,c%
\end{array}%
\right\vert z\right) =a\,z^{-a}\sum_{k=0}^{n}\binom{n}{k}\frac{\left(
b\right) _{k}}{\left( c\right) _{k}}\,\mathrm{B}_{z}\left( a_{\ell
}+k,1-b-k\right) .  \label{3F2_resultado}
\end{equation}%
Next, apply the differentiation formula (\ref{Diff_pFq})\ to (\ref%
{Theorem_p+2_F_p+1}) in order to obtain%
\begin{eqnarray}
&&_{p+3}F_{p+2}\left( \left.
\begin{array}{c}
a_{1},\ldots ,a_{p},b,c+n,d+m \\
a_{1}+1,\ldots ,a_{p}+1,c,d%
\end{array}%
\right\vert z\right)  \notag \\
&=&\frac{z^{1-d}\prod_{s=1}^{p}a_{s}}{\left( d\right) _{m}}\sum_{k=0}^{n}%
\binom{n}{k}\frac{\left( b\right) _{k}}{\left( c\right) _{k}}\sum_{\ell
=1}^{p}\frac{1}{\prod_{j\neq \ell }^{p}\left( a_{j}-a_{\ell }\right) }
\notag \\
&&\frac{d^{m}}{dz^{m}}\left[ z^{m+d-1-a_{\ell }}\,\,\mathrm{B}_{z}\left(
a_{\ell }+k,1-b-k\right) \right]  \label{Dm[z*beta]}
\end{eqnarray}%
Now, according to \cite[Eqn. 7.3.1(28)]{Prudnikov3}, we have the reduction
formula:
\begin{equation*}
_{2}F_{1}\left( \left.
\begin{array}{c}
a,b \\
a+1%
\end{array}%
\right\vert z\right) =a\,z^{-a}\,\mathrm{B}_{z}\left( a,1-b\right) \,,
\end{equation*}%
thus applying the differentiation formula (\ref{Diff_pFq_general}) and the
result given in (\ref{3F2_resultado}), we arrive at%
\begin{eqnarray}
&&\frac{d^{m}}{dz^{m}}\left[ z^{m+d-1-a_{\ell }}\,\,\mathrm{B}_{z}\left(
a_{\ell }+k,1-b-k\right) \right]  \notag \\
&=&\frac{1}{a_{\ell }+k}\frac{d^{m}}{dz^{m}}\left[ z^{m+d-1+k}\,\,_{2}F_{1}%
\left( \left.
\begin{array}{c}
a,b \\
a+1%
\end{array}%
\right\vert z\right) \right]  \notag \\
&=&\frac{\left( d+k\right) _{m}}{a_{\ell }+k}\,\,z^{d-1+k}\,_{3}F_{2}\left(
\left.
\begin{array}{c}
m+d-1+k,a_{\ell }+k,b+k \\
d+k,a_{\ell }+k+1%
\end{array}%
\right\vert z\right)  \notag \\
&=&\left( d+k\right) _{m}\,\,z^{d-1}\sum_{s=0}^{m}\binom{m}{s}\frac{\left(
b+k\right) _{s}}{\left( d+k\right) _{s}}\,z^{-a_{\ell }}\,\mathrm{B}%
_{z}\left( a_{\ell }+k+s,1-b-k-s\right) .  \label{Dm[z*beta]_resultado}
\end{eqnarray}%
Insert (\ref{Dm[z*beta]_resultado})\ in (\ref{Dm[z*beta]})\ and simplify the
result, taking into account the property%
\begin{equation*}
\frac{\Gamma \left( \alpha +m\right) }{\Gamma \left( \alpha +s\right) }%
=\left( \alpha \right) _{m-s},
\end{equation*}%
to complete the proof.
\end{proof}

\begin{theorem}
For $p=1,2,\ldots $, $a_{i}\neq a_{j}$ $\left( i\neq j\right) $, and $%
\mathrm{Re}b<1-\max \left( n,m\right) $, the following reduction formula
holds true:%
\begin{eqnarray}
&&_{p+3}F_{p+2}\left( \left.
\begin{array}{c}
a_{1},\ldots ,a_{p},b,c+n,d+m \\
a_{1}+1,\ldots ,a_{p}+1,c,d%
\end{array}%
\right\vert 1\right)  \label{Theorem_p+3_F_p+2_(1)} \\
&=&\frac{\prod_{s=1}^{p}a_{s}}{\left( d\right) _{m}\left( c\right) _{n}}%
\sum_{\ell =1}^{p}\frac{\left( d-a_{\ell }\right) _{m}\left( c-a_{\ell
}\right) _{n}}{\prod_{j\neq \ell }^{p}\left( a_{j}-a_{\ell }\right) }\,%
\mathrm{B}\left( a_{\ell },1-b\right) .  \notag
\end{eqnarray}
\end{theorem}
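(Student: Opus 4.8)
The plan is to evaluate the general-$z$ reduction formula (\ref{Theorem_p+3_F_p+2_new}) at $z=1$ and simplify the resulting nested sums. Setting $z=1$ turns each incomplete beta function $\mathrm{B}_{z}(a_{\ell}+k+s,1-b-k-s)$ into the ordinary beta function $\mathrm{B}(a_{\ell}+k+s,1-b-k-s)$, which by (\ref{beta_def}) equals $\Gamma(a_{\ell}+k+s)\,\Gamma(1-b-k-s)/\Gamma(a_{\ell}+1-b)$. So the first step is to pull the $\ell$-dependent factor $\mathrm{B}(\cdot)$ apart, isolate the $\ell$-sum $\sum_{\ell=1}^{p}\Gamma(a_{\ell}+k+s)/\big[\Gamma(a_{\ell}+1-b)\prod_{j\neq\ell}(a_{j}-a_{\ell})\big]$, and use the reflection formula (\ref{gamma_reflection}) in the form $\Gamma(a_{\ell}+k+s)\Gamma(1-b-k-s)=\pi/\sin\pi(b+k+s)=(-1)^{k+s}\pi/\sin\pi b$ together with the Pochhammer identity $\Gamma(a_{\ell}+k+s)=\Gamma(a_{\ell})(a_{\ell})_{k+s}$, so that the $\ell$-sum reorganizes into $\mathrm{B}(a_{\ell},1-b)$ times $(a_{\ell})_{k+s}(-1)^{k+s}$.

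Once that rearrangement is done, the double sum over $k$ and $s$ should decouple into something summable by the Chu--Vandermonde consequence (\ref{Chu_4F3}), exactly as in the $z=1$ reduction of (\ref{Theorem_p+2_F_p+1}) carried out in the paragraph preceding Theorem~\ref{Theorem: pFq_new_formulation}. Concretely, after substitution the inner structure is $\sum_{k=0}^{n}\binom{n}{k}\frac{1}{(c)_k}\sum_{s=0}^{m}\binom{m}{s}(b+k)_s(d+k+s)_{m-s}(-1)^{k+s}(a_{\ell})_{k+s}$. I expect the $s$-sum, with the factors $(b+k)_s$, $(d+k+s)_{m-s}$ and $(a_\ell)_{k+s}=(a_\ell)_k(a_\ell+k)_s$, to collapse via Chu--Vandermonde (\ref{Chu_Vandermonde}) to a clean ratio of Pochhammer symbols involving $(d-a_\ell)_m$, and then the remaining $k$-sum, again by (\ref{Chu_4F3}), to produce $(c-a_\ell)_n/(c)_n$. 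Matching powers and Pochhammer normalizations should then give exactly the claimed right-hand side $\frac{\prod a_s}{(d)_m(c)_n}\sum_\ell \frac{(d-a_\ell)_m(c-a_\ell)_n}{\prod_{j\neq\ell}(a_j-a_\ell)}\mathrm{B}(a_\ell,1-b)$.

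A cleaner route, which I would prefer if the bookkeeping above gets unwieldy, is to start instead from the $z=1$ formula (\ref{p+2_F_p+1(1)}) already established, apply Corollary~\ref{Corollary: main} with respect to the parameter pair $(d+m,d)$ to express the $_{p+3}F_{p+2}(1)$ as a finite sum of $_{p+2}F_{p+1}(1)$'s with shifted parameters, insert (\ref{p+2_F_p+1(1)}) into each term, and then interchange the sum over the shift index with the $\ell$-sum and evaluate the shift-sum by (\ref{Chu_4F3}). This parallels the derivation of Theorem~\ref{Theorem: 3F3} from Theorem~\ref{Theorem: 2F2}: each application of the corollary costs one Chu--Vandermonde summation to clean up. Either way the hypothesis $\mathrm{Re}\,b<1-\max(n,m)$ is precisely what guarantees all the beta integrals converge at $z=1$ (i.e. $\mathrm{Re}(1-b-k-s)>0$ for the relevant $k\le n$, $s\le m$), which is where that assumption enters.

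The main obstacle will be the combinatorial simplification of the double $(k,s)$-sum: one has to route the Pochhammer factors $(b+k)_s$, $(d+k+s)_{m-s}$, and $(a_\ell)_{k+s}$ through the right splittings $(\,\cdot\,)_{k+s}=(\,\cdot\,)_k(\,\cdot+k)_s$ and $\Gamma(\alpha+m)/\Gamma(\alpha+s)=(\alpha)_{m-s}$ so that Chu--Vandermonde applies cleanly in each variable; a wrong grouping leaves a $_3F_2$ that does not obviously sum. Verifying the $p=1$ special case against the remark following (\ref{p+2_F_p+1(1)}) (which should match (\ref{4F3(1)_theorem}) with $n=1$) is a good consistency check to keep the normalization constants honest.
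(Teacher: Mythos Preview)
Your plan is correct and both routes you sketch lead to the claimed identity, but the paper takes a slightly different path. Rather than specializing the double-sum formula (\ref{Theorem_p+3_F_p+2_new}) at $z=1$ and then evaluating the nested $(k,s)$-sums, the paper specializes the $\mathcal{H}^{m}$-formulation (\ref{Theorem_p+3_F_p+2}); the point is that the special value (\ref{H^n(a,b,c,1)}), namely $\mathcal{H}^{m}(\alpha,\beta,\gamma;1)=(-1)^{m}(-\gamma)_{m}\,\mathrm{B}(\alpha,\beta)$, collapses the $m$-fold derivative in one stroke, so no $s$-sum ever appears and only a single Chu--Vandermonde (for the $k$-sum, via (\ref{Chu_4F3})) is needed. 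Your main route unpacks the same content explicitly: once the beta functions are rewritten, the $s$-sum becomes $(d+k)_{m}\,{}_{2}F_{1}(-m,a_{\ell}+k;d+k;1)=(d-a_{\ell})_{m}$, independent of $k$, and the remaining $k$-sum closes by (\ref{Chu_4F3}) exactly as you predict. Your alternative route through Corollary~\ref{Corollary: main} applied to (\ref{p+2_F_p+1(1)}) is arguably the cleanest of the three, requiring a single Chu--Vandermonde and no beta-function gymnastics. One minor correction to your write-up: the displayed reflection step $\Gamma(a_{\ell}+k+s)\,\Gamma(1-b-k-s)=\pi/\sin\pi(b+k+s)$ is wrong as written, since reflection pairs $\Gamma(b+k+s)$ with $\Gamma(1-b-k-s)$; what you actually need is $\Gamma(1-b-k-s)=(-1)^{k+s}\Gamma(1-b)/(b)_{k+s}$ together with $\Gamma(a_{\ell}+k+s)=\Gamma(a_{\ell})(a_{\ell})_{k+s}$, which combine to give $\mathrm{B}(a_{\ell}+k+s,1-b-k-s)=(-1)^{k+s}\,\mathrm{B}(a_{\ell},1-b)\,(a_{\ell})_{k+s}/(b)_{k+s}$, and this $(b)_{k+s}$ in the denominator is precisely what cancels the Pochhammer factors in $b$ coming from (\ref{Theorem_p+3_F_p+2_new}).
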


\begin{proof}
Taking $z=1$ in (\ref{Theorem_p+3_F_p+2}), and applying (\ref{H^n(a,b,c,1)}%
), we obtain
\begin{eqnarray*}
&&_{p+3}F_{p+2}\left( \left.
\begin{array}{c}
a_{1},\ldots ,a_{p},b,c+n,d+m \\
a_{1}+1,\ldots ,a_{p}+1,c,d%
\end{array}%
\right\vert 1\right) \\
&=&\frac{1}{\left( d\right) _{m}}\prod_{s=1}^{p}a_{s}\sum_{k=0}^{n}\binom{n}{%
k}\frac{\left( b\right) _{k}}{\left( c\right) _{k}}\sum_{\ell =1}^{p}\frac{%
\left( d-a_{\ell }\right) _{m}\,\mathrm{B}\left( a_{\ell }+k,1-b-k\right) }{%
\prod_{j\neq \ell }^{p}\left( a_{j}-a_{\ell }\right) }.
\end{eqnarray*}%
Now, exchange the sum order and expand the beta function according to (\ref%
{beta_def}),
\begin{eqnarray*}
&&_{p+3}F_{p+2}\left( \left.
\begin{array}{c}
a_{1},\ldots ,a_{p},b,c+n,d+m \\
a_{1}+1,\ldots ,a_{p}+1,c,d%
\end{array}%
\right\vert 1\right) \\
&=&\frac{\prod_{s=1}^{p}a_{s}}{\left( d\right) _{m}}\sum_{\ell =1}^{p}\frac{%
\left( d-a_{\ell }\right) _{m}}{\prod_{j\neq \ell }^{p}\left( a_{j}-a_{\ell
}\right) \,\Gamma \left( a_{\ell }+1-b\right) }\sum_{k=0}^{n}\binom{n}{k}%
\frac{\left( b\right) _{k}}{\left( c\right) _{k}}\,\Gamma \left( a_{\ell
}+k\right) \,\Gamma \left( 1-b-k\right) .
\end{eqnarray*}%
Apply the reflection formula (\ref{gamma_reflection})\ and the definition of
the Pochhammer symbol (\ref{Pochhammer_def})\ to arrive at%
\begin{eqnarray}
&&_{p+3}F_{p+2}\left( \left.
\begin{array}{c}
a_{1},\ldots ,a_{p},b,c+n,d+m \\
a_{1}+1,\ldots ,a_{p}+1,c,d%
\end{array}%
\right\vert 1\right)  \label{F_intermedios} \\
&=&\frac{\pi \prod_{s=1}^{p}a_{s}}{\sin \pi b\,\Gamma \left( b\right)
\,\left( d\right) _{m}}\sum_{\ell =1}^{p}\frac{\left( d-a_{\ell }\right)
_{m}\,\Gamma \left( a_{\ell }\right) }{\prod_{j\neq \ell }^{p}\left(
a_{j}-a_{\ell }\right) \,\Gamma \left( a_{\ell }+1-b\right) }\sum_{k=0}^{n}%
\binom{n}{k}\frac{\left( a_{\ell }\right) _{k}}{\left( c\right) _{k}}%
\,\left( -1\right) ^{k}.  \notag
\end{eqnarray}%
Now, insert (\ref{Chu_4F3}) in (\ref{F_intermedios})\ and take into account
the reflection formula (\ref{gamma_reflection})\ and the definition of the
beta function (\ref{beta_def}) to complete the proof.
\end{proof}

\bigskip

Note that the particular case $p=0$ is not included in (\ref%
{Theorem_p+3_F_p+2}). In order to derive the corresponding formula for $p=0$%
, we first prove the following lemma.

\bigskip

\begin{lemma}
For $m=0,1,2,\ldots $ and $\alpha ,\beta \in
\mathbb{R}
$, the following derivative formula holds true:%
\begin{eqnarray}
&&\frac{d^{m}}{dz^{m}}\left( \frac{z^{\alpha }}{\left( 1-z\right) ^{\beta }}%
\right)  \label{Lema} \\
&=&\frac{z^{\alpha -m}}{\left( 1-z\right) ^{\beta +m}}\left( \alpha
-m+1\right) _{m}\,_{2}F_{1}\left( \left.
\begin{array}{c}
-m,1-m+\alpha -\beta \\
1-m+\alpha%
\end{array}%
\right\vert z\right) .  \notag
\end{eqnarray}
\end{lemma}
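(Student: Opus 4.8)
The plan is to recognize the factor $(1-z)^{-\beta}$ as a $_{1}F_{0}$ series and then apply the differentiation rule already recorded in the Preliminaries, thereby reducing the whole statement to a single standard transformation of a Gauss hypergeometric function. Since $(1-z)^{-\beta}=\sum_{k\ge 0}\frac{(\beta)_{k}}{k!}\,z^{k}$ is exactly the $_{1}F_{0}$ series with single numerator parameter $\beta$, formula (\ref{Diff_pFq_general}) applied with $p=1$, $q=0$, $a_{1}=\beta$, $\gamma=\alpha$ and $n=m$ gives at once
\begin{equation*}
\frac{d^{m}}{dz^{m}}\left(\frac{z^{\alpha}}{(1-z)^{\beta}}\right)=\left(\alpha-m+1\right)_{m}\,z^{\alpha-m}\,_{2}F_{1}\left(\left.\begin{array}{c}\alpha+1,\beta\\\alpha+1-m\end{array}\right\vert z\right).
\end{equation*}
Hence it only remains to recast this $_{2}F_{1}$, whose numerator parameters are $\alpha+1$ and $\beta$, into one with numerator parameter $-m$, i.e.\ into the terminating polynomial in $z$ that appears on the right-hand side of (\ref{Lema}).

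For that second step I would invoke Euler's transformation $_{2}F_{1}(a,b;c;z)=(1-z)^{c-a-b}\,_{2}F_{1}(c-a,c-b;c;z)$; although it is not among the identities collected in Section \ref{Section: Preliminaries}, it is classical and can in any case be obtained from two successive applications of Pfaff's transformation (\ref{2F1_linear}), the second one applied to the function of argument $z/(z-1)$ produced by the first (with the two numerator parameters interchanged), after which the argument returns to $z$ and the two pulled-out factors combine to $(1-z)^{c-a-b}$. Taking $a=\alpha+1$, $b=\beta$ and $c=\alpha+1-m$, so that $c-a-b=-\beta-m$, $c-a=-m$ and $c-b=1-m+\alpha-\beta$, this yields
\begin{equation*}
_{2}F_{1}\left(\left.\begin{array}{c}\alpha+1,\beta\\\alpha+1-m\end{array}\right\vert z\right)=(1-z)^{-\beta-m}\,_{2}F_{1}\left(\left.\begin{array}{c}-m,\,1-m+\alpha-\beta\\1-m+\alpha\end{array}\right\vert z\right),
\end{equation*}
and substituting this back, together with the trivial identity $\left(\alpha-m+1\right)_{m}=\left(1-m+\alpha\right)_{m}$, reproduces exactly (\ref{Lema}).

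Everything above is elementary; the one point that needs a little care is the degenerate case in which the lower parameter $\alpha+1-m$ is a non-positive integer (equivalently $\alpha\in\{0,1,\ldots,m-1\}$), where the intermediate $_{2}F_{1}$ with that lower parameter is formally undefined. I would dispose of this by first establishing (\ref{Lema}) for $\alpha$ outside this finite exceptional set — where the argument above is literally valid — and then extending to all real $\alpha$ by continuity, noting that, apart from the elementary prefactor $z^{\alpha-m}(1-z)^{-\beta-m}$, the right-hand side of (\ref{Lema}) is a polynomial in $z$ whose coefficients depend analytically on $\alpha$. A self-contained alternative that avoids Euler's transformation altogether is to apply Leibniz's rule (\ref{Leibniz}) directly to the product $z^{\alpha}\cdot(1-z)^{-\beta}$, using $\frac{d^{k}}{dz^{k}}z^{\alpha}=(\alpha-k+1)_{k}z^{\alpha-k}$ and $\frac{d^{j}}{dz^{j}}(1-z)^{-\beta}=(\beta)_{j}(1-z)^{-\beta-j}$, expanding the surviving powers of $(1-z)$ by the binomial theorem, and collapsing the resulting inner sum by the Chu--Vandermonde identity (\ref{Chu_Vandermonde}); that route is considerably more laborious, and carrying out that inner summation in closed form is the step I would expect to be the main obstacle.
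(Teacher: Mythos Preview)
Your argument is correct. The use of (\ref{Diff_pFq_general}) with $p=1$, $q=0$, $a_{1}=\beta$, $\gamma=\alpha$ is legitimate and yields the intermediate $_{2}F_{1}(\alpha+1,\beta;\alpha+1-m;z)$ immediately; Euler's transformation then finishes the job, and your derivation of Euler from two applications of (\ref{2F1_linear}) is fine. The continuity argument for the exceptional values $\alpha\in\{0,1,\ldots,m-1\}$ is also adequate.

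The paper's route is different, and in fact closer to what you sketch as your ``self-contained alternative'': it applies Leibniz's rule (\ref{Leibniz}) directly to the product $z^{\alpha}(1-z)^{-\beta}$, obtaining a finite sum in powers of $z/(z-1)$. However, the inner sum is \emph{not} collapsed via Chu--Vandermonde as you anticipate; instead, after the elementary identity $(-\alpha)_{m-\ell}=(-1)^{m+\ell}(\alpha-m+1)_{m}/(1-m+\alpha)_{\ell}$, the sum is recognized as $_{2}F_{1}(-m,\beta;1-m+\alpha;z/(z-1))$, and a single application of Pfaff's transformation (\ref{2F1_linear}) brings the argument back to $z$ and produces (\ref{Lema}). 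So the step you flagged as ``the main obstacle'' is handled by Pfaff rather than Chu--Vandermonde, and is no harder than your own final step. Your route is shorter because (\ref{Diff_pFq_general}) already packages the Leibniz computation; the paper's route has the minor advantage that every ingredient (Leibniz, Pfaff) is already listed in Section~\ref{Section: Preliminaries}, whereas you must assemble Euler from two Pfaff steps.
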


\begin{proof}
Apply Leibinz's differentiation formula \cite[Eqn. 1.4.2]{NIST}%
\begin{equation*}
\frac{d^{n}}{dz^{n}}\left[ f\left( z\right) \,g\left( z\right) \right]
=\sum_{k=0}^{n}\binom{n}{k}\frac{d^{k}}{dz^{k}}\left[ f\left( z\right) %
\right] \,\frac{d^{n-k}}{dz^{n-k}}\left[ g\left( z\right) \right] ,
\end{equation*}%
and the $n$-th derivative formula \cite[Eqn. 1.1.2(1)]{Brychov}%
\begin{equation*}
\frac{d^{n}}{dz^{n}}\left[ z^{\alpha }\right] =\left( -1\right) ^{n}\left(
-\alpha \right) ^{n}z^{\alpha -n},
\end{equation*}%
hence%
\begin{equation*}
\frac{d^{n}}{dz^{n}}\left[ \left( 1-z\right) ^{\alpha }\right] =\left(
-\alpha \right) ^{n}\left( 1-z\right) ^{\alpha -n},
\end{equation*}%
to obtain%
\begin{eqnarray}
&&\frac{d^{m}}{dz^{m}}\left( \frac{z^{\alpha }}{\left( 1-z\right) ^{\beta }}%
\right)  \notag \\
&=&\sum_{\ell =0}^{m}\binom{m}{\ell }\frac{d^{\ell }}{dz^{\ell }}\left[
\left( 1-z\right) ^{-\beta }\right] \,\frac{d^{m-\ell }}{dz^{m-\ell }}\left[
z^{\alpha }\right]  \notag \\
&=&\frac{\left( -1\right) ^{m}z^{\alpha -m}}{\left( 1-z\right) ^{\beta }}%
\sum_{\ell =0}^{m}\binom{m}{\ell }\left( \beta \right) _{\ell }\left(
-\alpha \right) _{m-\ell }\left( \frac{z}{z-1}\right) ^{\ell }.  \label{D^m}
\end{eqnarray}%
Now, according to (\ref{Hypergeometric_sum})\ and (\ref{(-m)_k}), we have%
\begin{eqnarray}
_{2}F_{1}\left( \left.
\begin{array}{c}
-m,\beta \\
1-m+\alpha%
\end{array}%
\right\vert z\right) &=&\sum_{\ell =0}^{\infty }\frac{\left( -m\right)
_{\ell }\left( \beta \right) _{\ell }}{\ell !\,\left( 1-m+\alpha \right)
_{\ell }}z^{\ell }  \notag \\
&=&\sum_{\ell =0}^{m}\binom{m}{\ell }\frac{\left( \beta \right) _{\ell
}\,\left( -z\right) ^{\ell }}{\left( 1-m+\alpha \right) _{\ell }}.
\label{2F1(-m)}
\end{eqnarray}%
However, from the definition of the Pochhammer symbol (\ref{Pochhammer_def})
and the property (\ref{(-x)_n}),\ it is easy to prove\ the identity%
\begin{equation}
\left( -\alpha \right) _{m-\ell }=\frac{\left( -1\right) ^{m+\ell }\left(
\alpha -m+1\right) _{m}}{\left( 1-m+\alpha \right) _{\ell }},
\label{Identity}
\end{equation}%
thus, substituting (\ref{Identity})\ in (\ref{D^m})\ and taking into account
(\ref{2F1(-m)}), we arrive at%
\begin{equation*}
\frac{d^{m}}{dz^{m}}\left( \frac{z^{\alpha }}{\left( 1-z\right) ^{\beta }}%
\right) =\frac{z^{\alpha -m}\left( \alpha -m+1\right) _{m}}{\left(
1-z\right) ^{\beta }}\,_{2}F_{1}\left( \left.
\begin{array}{c}
-m,\beta \\
1-m+\alpha%
\end{array}%
\right\vert \frac{z}{z-1}\right) .
\end{equation*}%
Finally, apply the linear transformation formula (\ref{2F1_linear})\ to
complete the proof.
\end{proof}

\begin{theorem}
For $b,c,d\in
\mathbb{C}
$, $n,m=0,1,2,\ldots $, and $z\neq 1$, the following reduction formula holds
true:%
\begin{eqnarray}
&&_{3}F_{2}\left( \left.
\begin{array}{c}
b,c+n,d+m \\
c,d%
\end{array}%
\right\vert z\right)  \label{3F2_p=0} \\
&=&\frac{1}{\left( 1-z\right) ^{b+m}}\sum_{k=0}^{n}\binom{n}{k}\frac{\left(
b\right) _{k}\,\left( d+m\right) _{k}}{\left( c\right) _{k}\,\left( d\right)
_{k}}\left( \frac{z}{1-z}\right) ^{k}\,_{2}F_{1}\left( \left.
\begin{array}{c}
-m,d-b \\
d+k%
\end{array}%
\right\vert z\right) .  \notag
\end{eqnarray}
\end{theorem}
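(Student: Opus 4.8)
The plan is to apply Corollary~\ref{Corollary: main} once to peel off the distinguished pair $(c+n,c)$, then to recognise the resulting inner ${}_{2}F_{1}$ as an $m$-th derivative of an elementary function through the differentiation formula (\ref{Diff_pFq}), and finally to evaluate that derivative with the Lemma in (\ref{Lema}), which was in fact tailored for precisely this purpose.

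First I would reorder the parameters of the left-hand ${}_{3}F_{2}$ so that $c+n$ is the last upper parameter and $c$ the last lower parameter, i.e.\ regard it as a ${}_{p+1}F_{q+1}$ with $p=2$, $q=1$, upper parameters $b,\,d+m$ and lower parameter $d$. Then (\ref{Corollary_main}) gives
\[
{}_{3}F_{2}\!\left(\left.\begin{array}{c} b,c+n,d+m\\ c,d\end{array}\right\vert z\right)
=\sum_{k=0}^{n}\binom{n}{k}\frac{(b)_{k}\,(d+m)_{k}\,z^{k}}{(c)_{k}\,(d)_{k}}\,
{}_{2}F_{1}\!\left(\left.\begin{array}{c} b+k,d+m+k\\ d+k\end{array}\right\vert z\right).
\]
The next step is to evaluate this inner ${}_{2}F_{1}$ in closed form. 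Applying (\ref{Diff_pFq}) with $p=1$, $q=0$, free upper parameter $b+k$, and special pair $(a,a+n)=(d+k,\,d+m+k)$, together with the binomial series ${}_{1}F_{0}(b+k;-;z)=(1-z)^{-(b+k)}$, one gets
\[
{}_{2}F_{1}\!\left(\left.\begin{array}{c} b+k,d+m+k\\ d+k\end{array}\right\vert z\right)
=\frac{z^{1-d-k}}{(d+k)_{m}}\,\frac{d^{m}}{dz^{m}}\!\left[\frac{z^{m+d+k-1}}{(1-z)^{b+k}}\right].
\]

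Now I would invoke the Lemma (\ref{Lema}) with $\alpha=m+d+k-1$ and $\beta=b+k$; a quick check gives $\alpha-m+1=d+k$, $1-m+\alpha=d+k$ and $1-m+\alpha-\beta=d-b$, so the $m$-th derivative equals $z^{d+k-1}(1-z)^{-(b+k+m)}(d+k)_{m}\,{}_{2}F_{1}(-m,d-b;d+k;z)$. Substituting this back, the powers $z^{1-d-k}$ and $z^{d+k-1}$ multiply to $1$ and the two factors $(d+k)_{m}$ cancel, so
\[
{}_{2}F_{1}\!\left(\left.\begin{array}{c} b+k,d+m+k\\ d+k\end{array}\right\vert z\right)
=\frac{1}{(1-z)^{b+k+m}}\,{}_{2}F_{1}\!\left(\left.\begin{array}{c} -m,d-b\\ d+k\end{array}\right\vert z\right).
\]
Inserting this into the sum over $k$, pulling out the common factor $(1-z)^{-(b+m)}$, and writing $z^{k}(1-z)^{-k}=\bigl(z/(1-z)\bigr)^{k}$ then yields exactly (\ref{3F2_p=0}).

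The argument is essentially bookkeeping, so there is no serious obstacle; the step needing the most care is matching the parameters of the Lemma and checking that the prefactors collapse, i.e.\ that $z^{1-d-k}z^{d+k-1}=1$ and $(d+k)_{m}/(d+k)_{m}=1$. The hypothesis $z\neq 1$ is exactly what legitimises the elementary manipulations — the identity ${}_{1}F_{0}(b+k;-;z)=(1-z)^{-(b+k)}$ and the factoring out of powers of $1-z$ — and it is also the reason this $p=0$ case, which is excluded from (\ref{Theorem_p+3_F_p+2}), must be handled separately via the Lemma.
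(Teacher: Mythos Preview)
Your proof is correct and follows essentially the same approach as the paper. The only cosmetic difference is the order in which the two special pairs are handled: the paper first applies the differentiation formula (\ref{Diff_pFq}) to the pair $(d+m,d)$ and then expands the resulting ${}_{2}F_{1}(b,c+n;c;z)$ via the quoted formula (\ref{Reduction_2F1}), whereas you first split off $(c+n,c)$ with Corollary~\ref{Corollary: main} and then apply (\ref{Diff_pFq}) to the inner ${}_{2}F_{1}$; both routes lead to the same $m$-th derivative of $z^{m+d+k-1}(1-z)^{-(b+k)}$, evaluated by the Lemma in (\ref{Lema}), and the paper's final simplification $\tfrac{(d+k)_{m}}{(d)_{m}}=\tfrac{(d+m)_{k}}{(d)_{k}}$ is precisely the identity your cancellation hides.
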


\begin{proof}
According to the differentiation formula (\ref{Diff_pFq})\ and the reduction
formula (\ref{Reduction_2F1}), we have
\begin{equation*}
_{3}F_{2}\left( \left.
\begin{array}{c}
b,c+n,d+m \\
c,d%
\end{array}%
\right\vert z\right) =\frac{z^{1-d}}{\left( d\right) _{m}}\sum_{k=0}^{n}%
\binom{n}{k}\frac{\left( b\right) _{k}}{\left( c\right) _{k}}\frac{d^{m}}{%
dz^{m}}\left[ \frac{z^{k+m+d-1}}{\left( 1-z\right) ^{b+k}}\right] .
\end{equation*}%
Apply the differentiation formula (\ref{Lema}) and take into account the
property%
\begin{equation*}
\frac{\left( d+k\right) _{m}}{\left( d\right) _{m}}=\frac{\left( d+m\right)
_{k}}{\left( d\right) _{k}},
\end{equation*}%
to complete the proof.
\end{proof}

\begin{remark}
Note that
\begin{equation*}
\lim_{z\rightarrow 1}\,\left\vert _{3}F_{2}\left( \left.
\begin{array}{c}
b,c+n,d+m \\
c,d%
\end{array}%
\right\vert z\right) \right\vert =\infty .
\end{equation*}
\end{remark}

\begin{remark}
As a consistency test, note that (\ref{3F2_p=0})\ is reduced to (\ref%
{Reduction_2F1}), taking $b=d$.
\end{remark}

\section{Conclusions}

\label{Section: conclusions}

Throughout the paper, we have systematically used the hypergeometric
identity (\ref{Corollary: main}), i.e.
\begin{eqnarray}
&&_{p+1}F_{q+1}\left( \left.
\begin{array}{c}
a_{1},\ldots ,a_{p},c+n \\
b_{1},\ldots ,b_{q},c%
\end{array}%
\right\vert z\right)  \label{Corollary_main_conclusions} \\
&=&\sum_{k=0}^{n}\binom{n}{k}\frac{\,\left( a_{1}\right) _{k}\cdots \left(
a_{p}\right) _{k}\,\,z^{k}}{\,\left( b_{1}\right) _{k}\cdots \left(
b_{q}\right) _{k}\,}\,_{p}F_{q}\left( \left.
\begin{array}{c}
a_{1}+k,\ldots ,\,a_{p}+k \\
b_{1}+k,\ldots ,\,b_{q}+k%
\end{array}%
\right\vert z\right) ,  \notag
\end{eqnarray}%
to obtain a set of summation formulas that do not seem to be reported in the
existing literature. In order to see how this method works, consider a
vector of parameters $\vec{\alpha}=\left( \alpha _{1},\ldots ,\alpha _{\ell
}\right) $ and adopt the notation, $\vec{\alpha}+k=\left( \alpha
_{1}+k,\ldots ,\alpha _{\ell }+k\right) $. If we know the reduction formula
of a generalized hypergeometric function of the form:%
\begin{equation*}
_{p}F_{q}\left( \left.
\begin{array}{c}
a_{1}\left( \vec{\alpha}\right) ,\ldots ,\,a_{p}\left( \vec{\alpha}\right)
\\
b_{1}\left( \vec{\alpha}\right) ,\ldots ,\,b_{q}\left( \vec{\alpha}\right)%
\end{array}%
\right\vert z\right) ,
\end{equation*}%
in such a way that $\forall i=1,\ldots ,p$ and $\forall j=1,\ldots ,q$ we
have
\begin{equation*}
\left\{
\begin{array}{c}
a_{i}\left( \vec{\alpha}+k\right) =a_{i}\left( \vec{\alpha}\right) +k, \\
b_{j}\left( \vec{\alpha}+k\right) =b_{j}\left( \vec{\alpha}\right) +k,%
\end{array}%
\right.
\end{equation*}%
then, according to (\ref{Corollary_main_conclusions}), we obtain $\forall
n=0,1,2,\ldots $ a reduction formula for the generalized hypergeometric
function:
\begin{equation*}
_{p+1}F_{q+1}\left( \left.
\begin{array}{c}
a_{1}\left( \vec{\alpha}\right) ,\ldots ,\,a_{p}\left( \vec{\alpha}\right)
,c+n \\
b_{1}\left( \vec{\alpha}\right) ,\ldots ,\,b_{q}\left( \vec{\alpha}\right) ,c%
\end{array}%
\right\vert z\right) .
\end{equation*}

It is worth noting that during the development of the proofs, we have found
some interesting formulas such as the recursive formula (\ref{Beta_recursive}%
) or the proof of the conjecture given in (\ref{pFp_general}). In addition,
we have proved in (\ref{Sum_Psi}) and (\ref{Sum_Psi_simple}) two finite sums
involving the psi function. Also, with the new reformulation given in (\ref%
{Theorem_p+2_F_p+1})\ of the reduction formula given in the literature, i.e.
(\ref{Reduction_pFq_JL}), we were able to obtain two new equivalent
reduction formulas in (\ref{Theorem_p+3_F_p+2})\ and (\ref%
{Theorem_p+3_F_p+2_new}). Finally, for the special case given in (\ref%
{3F2_p=0}), which is not included in (\ref{Theorem_p+3_F_p+2}), we have
developed a particular proof.

\end{document}